\documentclass[preprint,12pt,a4paper,compress]{elsarticle}
\usepackage{amsmath} 
\allowdisplaybreaks[3]
\usepackage{geometry} 
\geometry{left=2.5cm,right=2.5cm,top=2.5cm,bottom=2.5cm}
\usepackage{graphicx} 
\usepackage{lineno}  
\modulolinenumbers[8] 
\usepackage{multirow} 
\usepackage{amsthm} 
\usepackage{enumerate} 
\usepackage{enumitem} 
\usepackage[linesnumbered,lined,boxed]{algorithm2e} 
\usepackage[figuresright]{rotating}
\usepackage{threeparttable}
\usepackage{booktabs}
\usepackage{float} 
\usepackage{bm} 
\usepackage{setspace} 

\usepackage{indentfirst} 

\usepackage{amsfonts}
\usepackage{amssymb}
\usepackage{color}
\usepackage{graphicx}
\usepackage{hyperref}
\usepackage{appendix}

\newtheorem{theorem}{Theorem}[section] 
 
\newtheorem{lemma}[theorem]{Lemma} 
\newtheorem{corollary}[theorem]{Corollary}

\newtheorem{example}{Example}[section]
\newtheorem{proposition}[theorem]{Proposition}

\newtheorem{remark}[theorem]{Remark}

\numberwithin{equation}{section}

\newtheorem*{convention}{Convention}

\journal{Communications in algebra}

\makeatletter
\def\ps@pprintTitle{%
	\let\@oddhead\@empty
	\let\@evenhead\@empty
	\def\@oddfoot{\reset@font\hfil\thepage\hfil} 
	\let\@evenfoot\@oddfoot
}
\makeatother

\begin{document}
    
\begin{frontmatter}
	
	\title{Sub-structure in module category of Virasoro vertex operator algebras $L(c_{p,q}, 0)$}

	\author[]{Qirui Fang}
\author[]{Yu Teng}
\author[]{Yukun Xiao\corref{mycorrespondingauthor}}
\cortext[mycorrespondingauthor]{Corresponding author}
\ead{ykxiao@qdu.edu.cn}
\author[]{Wen Zheng}
\address{School of Mathematics and Statistics, Qingdao University, Qingdao 266071, China\\
 Qingdao International Mathematics Center, Qingdao, 266071, China}

	\begin{abstract}
		We determine all premodular subcategories and modular tensor subcategories in the module categories of Virasoro vertex operator algebras $L(c_{p,q},0)$ and the module categories of the simple current extensions of $L(c_{p,p+1},0)$.
	\end{abstract}
	
	\begin{keyword} 
		Vertex operator algebra, modular tensor category, subcategory, unitary series, simple current extension.
		
		\textbf{\small{MSC 2020:} } 17B69, 18M20.
	\end{keyword}
	
\end{frontmatter}

\section{Introduction}
The birth of vertex operator algebra theory and modular tensor category theory was
partially motivated by two-dimensional conformal field theory (see \cite{FLM,MM}). Among numerous connections between them, the module category of a rational  $C_2$-cofinite vertex operator algebra is a modular tensor category over $\mathbb{C}$ (see \cite{H1,H2}). 

A modular tensor category $\mathcal{C}$ is said to be prime if every semisimple modular tensor subcategory is equivalent to either $\mathcal{C}$  or the trivial modular category $\operatorname{Vec}_{\mathbb{C}}$. There is the prime factorization introduced by Müger in \cite{M} which states that every modular tensor category is equivalent to the Deligne products of prime modular tensor categories. In this sense, modular tensor subcategories are extremely important since they could be regarded as the skeleton of the original modular tensor category. Motivated by these aspects, we determine all modular tensor subcategories in the module categories of Virasoro vertex operator algebras $L(c_{p,q},0)$ which are denoted by $\mathcal{C}_{p,q}$ and the module categories of the simple current extensions of $L(c_{p,p+1},0)$ which are denoted by $\widetilde{\mathcal{C}}_p$.
\begin{convention}\emph{
		In this article, the subcategory $\mathcal{D}$ of modular tensor category $\mathcal{C}$ is a  nontrivial full subcategory whose class of simple objects is a subcollection of the simple objects in $\mathcal{C}$.    } 
\end{convention}
Apparently, not every subcategory of modular tensor category is still modular. Consider the simplest example $\mathcal{C}_{3,4}$ which represents for the module category of $L(\frac{1}{2},0)$. It has only three simple objects $L(\frac{1}{2},0)$, $L(\frac{1}{2},\frac{1}{2})$ and $L(\frac{1}{2},\frac{1}{16})$. By fusion rules (see \cite{DMZ, W}), the subcategory generated by $L(\frac{1}{2},0)$ and $L(\frac{1}{2},\frac{1}{2})$ is the only possible  premodular subcategory of $\mathcal{C}_{3,4}$ but it is not modular. As for general categories $\mathcal{C}_{p,q}$  and $\widetilde{\mathcal{C}}_p$, we also begin with determining all possible premodular subcategories by fusion rules and then pick the modular ones by computing the determinant of $S$-matrices (we also provide a more categorical method in Section 3 by determining the Müger center). The premodular subcategories yielded in this process are useful. As in Section 5, we glue the simple objects of these premodular subcategories (which may not be modular) to realize a large class of vertex operator subalgebras appearing in the GKO construction.

The rest of this paper is organized as follows: In Section 2, we prove that there are only 6 premodular subcategory in $\mathcal{C}_{p,q}$ when $p\geq 5$ (other cases see Remark \ref{other case premodular}). Furthermore, there are only 2 modular tensor subcategories in $\mathcal{C}_{p,q}$ when $p, q$ have different parity and 6  when $p, q$ are both odd.
In Section 3, we use a different and more categorical method to solve this problem. In particular, we have provided a criterion to determine which object lies in the Müger Center (see Lemma \ref{lm1.1}). 
Section 4 is devoting to discuss the  simple current extensions of $L(c_{p,p+1},0)$, where $p\equiv1, 2\pmod4$.  We show that there are only 2 premodular and modular tensor subcategories in $\widetilde{\mathcal{C}}_p$ for $p \neq 9,10$, and 6 for $p=9,10$. In Section 5, as an application of our results, a large class of vertex operator algebras which appear in the Deligne product of two suitable premodular subcategories determined in Section 2 are obtained.

We assume that the reader is familiar with the basic knowledge on vertex operator algebras (cf. \cite{FLM},\cite{LL}) and modular tensor categories (cf. \cite{EGNO}).

\section{Premodular and modular tensor subcategories in $\mathcal{C}_{p,q}$}

A {\it premodular category} is a $\mathbb{C}$-linear finite semisimple monoidal rigid abelian category with a braiding and a ribbon structure. A {\it modular tensor category} is a premodular category which $S$-matrix is nondegenerate. The finite order of $T$-matrix in a premodular category $\mathcal{C}$ is usually called its \textbf{Frobenius-Schur exponent}, denoted by $\text{FSexp}(\mathcal{C})$.

Denote by $\mathcal{C}_{p,q}$ the module category of Virasoro vertex operator algebra $L(c_{p,q},0)$ and by $(m,n)$ the module $L(c_{p,q},h_{m,n})$ of $L(c_{p,q},0)$, where $c_{p,q}=1-\frac{6(p-q)^2}{p q}$, $h_{m,n}=\frac{(pm-nq)^2-(p-q)^2}{4pq}$, $p,q\geq2, \text{gcd}(p,q)=1, 0<m<p$ and $0<n<q$. Without loss of generality, we assume that $p<q$ in this article. In fact, $\mathcal{C}_{p,q}$ are modular tensor categories.

\begin{proposition}\label{prop1}
	There are only six nontrivial premodular subcategories of $\mathcal{C}_{p,q}$ for any $p\geq5$.
\end{proposition}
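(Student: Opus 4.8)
The plan is to convert the statement into a combinatorial classification of fusion-closed sets of simple objects. A premodular subcategory of $\mathcal{C}_{p,q}$ inherits the braiding and ribbon structure automatically, so it is nothing more than a full subcategory whose simple objects form a subset $S$ that contains $(1,1)$, is closed under the fusion rules, and is closed under duals; since every simple object of $\mathcal{C}_{p,q}$ is self-dual, the last condition is free. The key structural input I would use is that the fusion ring of $\mathcal{C}_{p,q}$ is the $\mathbb{Z}_2$-quotient, under the identification $(m,n)=(p-m,q-n)$, of the product of two $\widehat{\mathfrak{sl}_2}$ Verlinde rings: one in the index $m$ at level $p-2$ and one in the index $n$ at level $q-2$. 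Because $S$ must be stable under this identification, once it contains $(1,1)$ it automatically contains the nontrivial invertible object $J=(1,q-1)=(p-1,1)$. So the problem is to list all identification-stable, fusion-closed $S$ containing $\{(1,1),J\}$.

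First I would settle the one-variable question. For a level $k\ge 3$ the only fusion-closed subsets of the labels $\{1,\dots,p-1\}$ under the $\widehat{\mathfrak{sl}_2}$ rule are $\{1\}$, the pointed pair $\{1,p-1\}$, the integer-spin (odd-label) part, and the full set. This is quick: if such a subset contains any label with $2\le m\le p-2$, then $m\otimes m$ already contains the label $3$, and $3$ generates every odd label; if the subset also contains any even label, one further fusion yields the label $2$, which generates everything. The hypothesis $p\ge 5$, i.e. $k\ge 3$, is exactly what keeps these four sets distinct, and its failure for small $p$ is the source of the exceptional cases alluded to in the Remark; the same statement holds in the $n$-direction at level $q-2$.

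I would then reconstruct $S$ from its two coordinate projections. Each projection is fusion-closed in its own index and contains the corresponding simple current (because $J\in S$), so each is one of the four sets above; the parity of $p$ (resp. $q$) decides whether the integer-spin option is allowed, since the current $p-1$ lies in the odd-label part only when $p$ is even. For a fixed admissible pair of projections there are at most two sets $S$ with those projections: the full product, or a single diagonal set obtained by matching the $\mathbb{Z}_2$-gradings of the two factors (a diagonal exists only when both factors are nontrivially graded). Imposing identification-stability and $J\in S$ on these candidates, and discarding the improper set consisting of all simple objects, I expect to land on exactly six proper nontrivial subcategories. I expect the diagonal sets to be the main obstacle: they are easy to miss, and the explicit six depend on the parities of $p$ and $q$ — for $p,q$ both odd one member is the diagonal $\{(m,n):m\equiv n \bmod 2\}$, which for opposite parities fails identification-stability and must be replaced by another subcategory — so the real work is checking that both parity regimes, realized by genuinely different subcategories, nevertheless always produce the same total of six.
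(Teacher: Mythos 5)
There is a genuine gap at the foundation of your enumeration: the claim that ``once $S$ contains $(1,1)$ it automatically contains the nontrivial invertible object $J=(1,q-1)=(p-1,1)$'' is false. The Kac-table identification $(m,n)\sim(p-m,q-n)$ relates two \emph{labels of the same simple object} (so $(1,1)=(p-1,q-1)$), and stability under it is automatic for any collection of objects; it does not force the \emph{different} object $(1,q-1)=(p-1,1)$ into $S$. Closure under fusion with $J$ is a genuine extra condition, and several of the six subcategories fail it: for $p,q$ both odd, the odd-row subcategory $\mathcal{C}_2=\{(1,1),(1,3),\dots,(1,q-2)\}$, the odd-column subcategory $\mathcal{C}_4$, and the subcategory $\mathcal{C}_6$ they generate all omit $J$ (the paper relies on exactly this absence of the simple current from $\mathcal{C}_6$ when analyzing M\"uger centers in Section 3); for $p$ odd and $q$ even, $\mathcal{C}_4$ again omits $J=(p-1,1)$ since $p-1$ is even. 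Because you impose ``$J\in S$'' as a constraint in your final screening step, your enumeration would discard (or wrongly enlarge) these subcategories and cannot arrive at the correct list of six.

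A secondary issue is that the reconstruction of $S$ from its two coordinate projections is asserted rather than carried out, and the identification makes ``projection'' subtler than in a true product of Verlinde rings: the single object $(1,n)=(p-1,q-n)$ already projects to $\{1,p-1\}$ in the $m$-direction and to $\{n,q-n\}$ in the $n$-direction, so whether the odd-label set is even realizable as a projection depends on the parities of $p$ and $q$, and the ``at most two sets with given projections'' claim needs proof. The paper instead argues directly with the fusion rules: any object $(m,n)$ with $m,n>1$ forces row and column objects (in particular $(1,3)$ and $(3,1)$) into $S$, and $(m,1)\boxtimes(1,n)=(m,n)$ then shows $S$ is generated by one horizontal and one vertical subcategory, reducing the count to a small table. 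Your one-variable classification (trivial, pointed pair $\{1,p-1\}$, odd labels, everything --- valid precisely when $p\ge5$) is correct and coincides with the paper's first step, but the gluing step must be redone with the label identification handled carefully and without the spurious constraint $J\in S$.
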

\begin{proof}
	We use the fusion rule in \cite{W}:
	\begin{align*}
		(m,n)\boxtimes(m',n')=\bigoplus_{(m'',n'')}(m'',n''),
	\end{align*}
	where $((m'',n''),(m,n),(m',n'))$ is an admissible triple of pairs. By the definition of premodular subcategories, we only need to employ fusion rules to study the closeness of fusion tensor product.
	
	For simplicity, we first identify all the subcategories in the horizontal direction or in the vertical direction, and then use them to generate larger subcategories. A \textbf{horizontal} or \textbf{vertical} subcategory of $\mathcal{C}_{p,q}$ consists of modules of the form $(1,n)$ or $(m,n)$, respectively.
	
	\textbf{Case 1}: When $p$ and $q$ are odd, we denote by $\mathcal{D}$ a possible horizontal subcategory of $\mathcal{C}_{p,q}$. By the definition, the unit object $(1,1)$ is contained in $\mathcal{D}$.
	
	If the module $(1,2)\in\mathcal{D}$, it follows that $(1,3)\in\mathcal{D}$ because $(1,2)\boxtimes(1,2)=(1,1)\oplus(1,3)$. Repeating this process, we find that for each $n\geq4$, $(1,n)$ is a summand of $(1,2)\boxtimes(1,n-1)$ and thus $(1,n)\in \mathcal{D}$. Then we obtain the subcategory 
	\begin{align*}
		\bm{\mathcal{C}_1}=\{(1,1),(1,2),(1,3),\dots,(1,q-1)\}.
	\end{align*}
	
	If the module $(1,2)\notin\mathcal{D}$ and $(1,3)\in\mathcal{D}$, it follows that $(1,5)\in\mathcal{D}$ because $(1,3)\boxtimes(1,3)=(1,1)\oplus(1,3)\oplus(1,5)$. Repeating this process, we also find that all the $(1,2n-1)\in \mathcal{D}$, since $(1,2n-1)$ is a summand of $(1,3)\boxtimes(1,2n-3)$ for $2\leq n\leq \frac{q-1}{2}$. Then we obtain the subcategory 
	\begin{align*}\
		\bm{\mathcal{C}_2}=\{(1,1),(1,3),(1,5),\dots,(1,q-2)\}.
	\end{align*}
	
	If the module $(1,n)\in\mathcal{D}$ where $n$ is the minimal (except $n=1$) and $4\leq n<q-1$, it follows that $(1,3)$ is a summand of $(1,n)\boxtimes(1,n)$, which contradicts the minimality of $n$.
	
	Because $(1,q-1)\boxtimes(1,q-1)=(1,1)$, $\mathcal{C}_5=\{(1,1),(1,q-1)\}$ is also a subcategory of $\mathcal{C}_{p,q}$.
	
	The discussion for the vertical subcategories is similar to the horizontal ones. There are also three vertical subcategories
	\begin{align*}
		\begin{split}
			\bm{\mathcal{C}_3} =& \{(1,1),(2,1),(3,1),\dots,(p-1,1)\}, \\
			\bm{\mathcal{C}_4} =& \{(1,1),(3,1),(5,1),\dots,(p-2,1)\}, \\
			\bm{\mathcal{C}_5} =& \{(1,1),(p-1,1)\}=\{(1,1),(1,q-1)\},
		\end{split}
	\end{align*}
	because modules $(m,n)$ and $(p-m,q-n)$ are identical in \cite{W}. 
	
	Next, we use the subcategories $\mathcal{C}_1,\mathcal{C}_2,\dots,\mathcal{C}_5$ to generate the other premodular subcategories of $\mathcal{C}_{p,q}$. Since $(1,n)$(or $(1,q-n)$) and $(m,1)$(or $(p-m,1)$) are contained in $(m,n)\boxtimes(m,n)$ for $m,n>1$, we do not need to consider the other single-row or single-column. Let $(\mathcal{D},\mathcal{D}')$ denote the subcategory generated by $\mathcal{D}$ and $\mathcal{D}'$. It is obvious that
	\begin{align*}
		(m,1)\boxtimes(1,n)=(m,n).
	\end{align*}
	Thus, we can generate the remaining subcategories using one horizontal subcategory and one vertical subcategory.
	
	\begin{table}[H]
		\centering
		\begin{tabular}{|c||c c c|}
			\hline
			& $\mathcal{C}_1$ & $\mathcal{C}_2$ & $\mathcal{C}_5$ \\
			\hline\hline
			$\mathcal{C}_3$ & $\mathcal{C}_{p,q}$ & $\mathcal{C}_{p,q}$ & $\mathcal{C}_{3}$ \\
			$\mathcal{C}_4$ & $\mathcal{C}_{p,q}$ & $\mathcal{C}_{6}$ & $\mathcal{C}_{3}$ \\
			$\mathcal{C}_5$ & $\mathcal{C}_{1}$ & $\mathcal{C}_{1}$ & $\mathcal{C}_{5}$ \\
			\hline
		\end{tabular}
		\caption{Generated subcategories where $p$ and $q$ are odd}\label{tb1}
	\end{table}
	
	The results are shown above, and we have obtained a new nontrivial subcategory:
	\begin{equation*}
		\bm{\mathcal{C}_6}:=(\mathcal{C}_2,\mathcal{C}_4)=\left\{
		\begin{array}{cccc}
			(1,1) & (1,3) & \dots & (1,q-2) \\
			(3,1) & (3,3) & \dots & (3,q-2) \\
			& \dots & \dots &  \\
			(p-2,1) & (p-2,3) & \dots & (p-2,q-2) 
		\end{array}
		\right\}.
	\end{equation*}

	\textbf{Case 2}: When $p$ is odd and $q$ is even, the subcategories $\mathcal{C}_1,\mathcal{C}_3,\mathcal{C}_4$ and $\mathcal{C}_5$ are the same as in the case where $p$ and $q$ are both odd. And we have the following results:
	\begin{align*}
		\mathcal{C}_2 & =\{(1,1),(1,3),(1,5),\dots,(1,q-1)\},\\
		\mathcal{C}_6 & =\left\{
		\begin{array}{cccc}
			(1,1) & (1,3) & \dots & (1,q-1) \\
			(2,1) & (2,3) & \dots & (2,q-1) \\
			& \dots & \dots &  \\
			(\frac{p-1}{2},1) & (\frac{p-1}{2},2) & \dots & (\frac{p-1}{2},q-1)
		\end{array}
		\right\}.
	\end{align*}
	
	\begin{table}[H]
		\centering
		\begin{tabular}{|c||c c c|}
			\hline
			& $\mathcal{C}_1$ & $\mathcal{C}_2$ & $\mathcal{C}_5$ \\
			\hline\hline
			$\mathcal{C}_3$ & $\mathcal{C}_{p,q}$ & $\mathcal{C}_{6}$ & $\mathcal{C}_{3}$ \\
			$\mathcal{C}_4$ & $\mathcal{C}_{p,q}$ & $\mathcal{C}_{6}$ & $\mathcal{C}_{3}$ \\
			$\mathcal{C}_5$ & $\mathcal{C}_{1}$ & $\mathcal{C}_{2}$ & $\mathcal{C}_{5}$ \\
			\hline
		\end{tabular}
		\caption{Generated subcategories where $p$ is odd and $q$ is even}\label{tb2}
	\end{table}
	
	\textbf{Case 3}: When $p$ is even and $q$ is odd, the subcategories $\mathcal{C}_1, \mathcal{C}_2, \mathcal{C}_3$ and $\mathcal{C}_5$ are the same as in the case where $p$ and $q$ are both odd. And we also have the following results:
	\begin{align*}
		\mathcal{C}_4 & =\{(1,1),(3,1),(5,1),\dots,(p-1,1)\},\\
		\mathcal{C}_6 & =\left\{
		\begin{array}{cccc}
			(1,1) & (1,2) & \dots & (1,\frac{q-1}{2}) \\
			(3,1) & (3,3) & \dots & (3,\frac{q-1}{2}) \\
			& \dots & \dots &  \\
			(p-1,1) & (p-1,2) & \dots & (p-1,\frac{q-1}{2})
		\end{array}
		\right\}.
	\end{align*}
	
	\begin{table}[H]
		\centering
		\begin{tabular}{|c||c c c|}
			\hline
			& $\mathcal{C}_1$ & $\mathcal{C}_2$ & $\mathcal{C}_5$ \\
			\hline\hline
			$\mathcal{C}_3$ & $\mathcal{C}_{p,q}$ & $\mathcal{C}_{p,q}$ & $\mathcal{C}_{3}$ \\
			$\mathcal{C}_4$ & $\mathcal{C}_{6}$ & $\mathcal{C}_{6}$ & $\mathcal{C}_{4}$ \\
			$\mathcal{C}_5$ & $\mathcal{C}_{1}$ & $\mathcal{C}_{1}$ & $\mathcal{C}_{5}$ \\
			\hline
		\end{tabular}
		\caption{Generated subcategories where $p$ is odd and $q$ is even}\label{tb2}
	\end{table}
	
	In conclusion, $\mathcal{C}_{p,q}$ exactly has six nontrivial premodular subcategories for any $p\geq5$.
	
\end{proof}

\begin{remark}\label{other case premodular}
	\em{When $p=2$ and $q\leq5$ ,there is no any nontrivial subcategory. When $p=2$ and $q\geq7$, there is only one nontrivial subcategory: $\{(1,1),(1,\frac{q-1}{2})\}$.
	When $p=3$ and $q=4$, there is only one nontrivial subcategory: $\{(1,1),(1,3)\}$. When $p=3$ and $q\geq5$, there are only two nontrivial subcategories: $\mathcal{C}_2$ and $\mathcal{C}_5$. When $p=4$, there are only four nontrivial premodular subcategories: $\mathcal{C}_1, \mathcal{C}_2, \mathcal{C}_3$ and $\mathcal{C}_5$.}
\end{remark}

\begin{proposition}\label{prop2}
	When $p$ and $q$ have different parity, the subcategories $\mathcal{C}_5$ and $\mathcal{C}_6$ are not modular; when both $p$ and $q$ are odd, they are modular.
\end{proposition}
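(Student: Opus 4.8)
The plan is to treat the two subcategories separately and, in each case, extract (non)degeneracy of the restricted $S$-matrix from the explicit modular data of $\mathcal{C}_{p,q}$,
\begin{align*}
S_{(m,n),(m',n')}=\frac{2\sqrt{2}}{\sqrt{pq}}(-1)^{1+mn'+m'n}\sin\!\left(\frac{\pi q m m'}{p}\right)\sin\!\left(\frac{\pi p n n'}{q}\right),\qquad \theta_{(m,n)}=e^{2\pi i h_{m,n}}.
\end{align*}
First I would dispose of $\mathcal{C}_5=\{(1,1),(1,q-1)\}$. Here $J:=(1,q-1)=(p-1,1)$ is invertible with $J\boxtimes J=(1,1)$, so $\mathcal{C}_5$ is a rank-two pointed braided category, and it is modular precisely when $J$ is not transparent, i.e. when its self-monodromy $\theta_{J\boxtimes J}\,\theta_J^{-2}=\theta_J^{-2}$ differs from $1$. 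A direct factorisation of the Kac weight gives $h_{1,q-1}=\tfrac{(p-2)(q-2)}{4}$, whence $\theta_J^{2}=(-1)^{(p-2)(q-2)}$. Thus $J$ is transparent iff $(p-2)(q-2)$ is even, which is exactly when $p,q$ have different parity; when both are odd we get $\theta_J^2=-1$, so $\begin{pmatrix}1&1\\1&\theta_J^{-2}\end{pmatrix}$ is nondegenerate and $\mathcal{C}_5$ is modular.

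For $\mathcal{C}_6$ I would exploit the constancy of the braiding sign on the restricted label set. When $p,q$ are both odd every object of $\mathcal{C}_6$ has both coordinates odd, so $mn'+m'n$ is even and $(-1)^{1+mn'+m'n}\equiv-1$; hence $S|_{\mathcal{C}_6}$ is, up to a nonzero scalar, the Kronecker product of the two sine matrices
\begin{align*}
P=\left[\sin\!\left(\tfrac{\pi q m m'}{p}\right)\right]_{m,m'\in\{1,3,\dots,p-2\}},\qquad R=\left[\sin\!\left(\tfrac{\pi p n n'}{q}\right)\right]_{n,n'\in\{1,3,\dots,q-2\}}.
\end{align*}
Modularity of $\mathcal{C}_6$ then reduces to nonsingularity of $P$ and of $R$. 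I would prove this by embedding $P$ as the odd--odd block of $M=[\sin(\tfrac{\pi q jk}{p})]_{j,k=1}^{p-1}$: since $p,q$ are odd, the reflection $j\mapsto p-j$ (a bijection between odd and even indices) yields $\sin(\tfrac{\pi q(p-j)(p-k)}{p})=-\sin(\tfrac{\pi q jk}{p})$ together with analogous identities for the mixed blocks, so that $M$ is equivalent to $\mathrm{diag}(P,P)$ times an invertible Hadamard-type matrix. Comparing ranks gives $\mathrm{rank}(M)=2\,\mathrm{rank}(P)$, and nonsingularity of $M$ (which follows from $\gcd(p,q)=1$ via the discrete-sine orthogonality relations) forces $P$ to have full rank $\tfrac{p-1}{2}$. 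The identical argument applied to $R$ then gives that $S|_{\mathcal{C}_6}$ is nondegenerate.

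For $\mathcal{C}_6$ when $p,q$ have different parity I would instead exhibit an explicit nonzero transparent object. In these cases the order-two invertible $J=(1,q-1)=(p-1,1)$ lies in $\mathcal{C}_6$, and since $J\boxtimes(m',n')=(m',q-n')$, the monodromy charge against $Y=(m',n')\in\mathcal{C}_6$ simplifies to
\begin{align*}
Q_J(Y)=h_J+h_Y-h_{J\boxtimes Y}\equiv \frac{q(m'-1)+p(n'-1)}{2}\pmod 1.
\end{align*}
A parity check shows $Q_J(Y)\in\mathbb{Z}$ for every $Y\in\mathcal{C}_6$: when $q$ is even each admissible $n'$ is odd, so $\tfrac{p(n'-1)}{2}\in\mathbb{Z}$ and $\tfrac{q(m'-1)}{2}\in\mathbb{Z}$, and the case $p$ even is symmetric. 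Hence $J\neq(1,1)$ is transparent in $\mathcal{C}_6$, the restricted $S$-matrix is degenerate, and $\mathcal{C}_6$ is not modular.

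The main obstacle is the nonsingularity of the restricted sine matrices $P$ and $R$ in the both-odd case: their index sets consist of the odd residues only, so the clean orthogonality relations for the full discrete-sine matrix do not apply directly, and the block/reflection reduction to the full matrix $M$ is the step demanding genuine care. Everything else amounts to bookkeeping with the twists $\theta$, fusion with the simple current $J$, and the constancy of the braiding sign on each subcategory.
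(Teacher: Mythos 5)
Your argument is correct, but it takes a genuinely different route from the paper's. For $\mathcal{C}_5$, and for the degenerate cases of $\mathcal{C}_6$, you argue categorically: you detect a nontrivial transparent object, namely the order-two simple current $J=(1,q-1)$, via its self-monodromy $\theta_J^{-2}=(-1)^{(p-2)(q-2)}$ and via the monodromy charge $Q_J(Y)\equiv\frac{q(m'-1)+p(n'-1)}{2}\pmod 1$; I checked both formulas against the Kac weights $h_{m,n}$ and they are right, and the parity bookkeeping (including the fact that $J$ does lie in $\mathcal{C}_6$ exactly when $p,q$ have different parity) is correct. The paper instead writes out the $2\times 2$ matrix $S^{(5)}$ explicitly and, for $\mathcal{C}_6$ with mixed parity, exhibits two coinciding rows of $S^{(6)}$; your transparency argument is essentially the Müger-center method the paper defers to its Section 3 as an alternative proof, so you have in effect anticipated that section. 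For the nondegenerate case of $\mathcal{C}_6$ (both $p,q$ odd) the paper computes $({S^{(6)}})^{\mathbf{T}}S^{(6)}=\frac12 I$ directly by trigonometric summation, whereas you reduce nonsingularity of the odd-indexed sine matrices $P,R$ to that of the full sine matrices via the reflection $j\mapsto p-j$: the sign identities you quote give the block structure $P\otimes\left(\begin{smallmatrix}1&1\\1&-1\end{smallmatrix}\right)$ with an invertible Hadamard factor, so the rank comparison closes the argument. That step is only sketched in your write-up (you flag it yourself), but the identities are the right ones and the reduction is sound; the paper's direct orthogonality computation is the more self-contained of the two. Each approach buys something: yours explains \emph{why} degeneracy occurs (a transparent simple current) with almost no computation, while the paper's yields the explicit Gram matrix and hence a fully elementary verification.
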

\begin{proof}
	Denote the $S$-matrix of $\mathcal{C}_i$ by $S^{(i)}$ (i=1,\dots,6). We need to verify the non-degeneracy of $S^{(5)}$ and $S^{(6)}$. It was proved in \cite{DLN} that the categorical $S$-matrix of category $\mathcal{C}_{p,q}$ and the genus one $S$-matrix (see \cite{Zhu})   of $L(c_{p,q},0)$ are identical up to a constant. So using the expression for the genus one $S$-matrix in calculations does not affect its non-degeneracy.
	
From the formula $(4.5)$ in \cite{DJX}, we have
	\begin{equation*}\label{eq6}
		S^{(5)}=\frac{8}{pq}\sin^2(\frac{q\pi}{p})\sin^2(\frac{p\pi}{q})\left\{
		\begin{array}{cc}
			-1 & (-1)^{p+q} \\
			(-1)^{p+q} & (-1)^{pq+1} 
		\end{array}
		\right\}.
	\end{equation*}
	Thus when $p$ and $q$ are odd, $\mathcal{C}_5$ is modular, otherwise it is not modular.
	
	When $p$ is odd and $q$ is even, the objects in $\mathcal{C}_6$ are of the form $(j,2i-1)$ with $1\leq i\leq\frac{q}{2}$ and $1\leq j\leq\frac{p-1}{2}$.
	
	\begin{align*}
		{S^{(6)}}_{(1,1)}^{(j,2i-1)} = \sqrt{\frac{8}{pq}}(-1)^j \sin(\frac{jq\pi}{p})\sin(\frac{(2i-1)p\pi}{q}),
	\end{align*}
	
	\begin{align*}
		\begin{split}
			{S^{(6)}}_{(1,q-1)}^{(j,2i-1)} = & \sqrt{\frac{8}{pq}}(-1)^{2i-1+j(q-1)+1}\sin(\frac{jq\pi}{p})\sin(\frac{p(q-1)(2i-1)\pi}{q})\\
			= & \sqrt{\frac{8}{pq}}(-1)^j \sin(\frac{jq\pi}{p})\sin(\frac{(2i-1)p\pi}{q}).
		\end{split}
	\end{align*}
	Since ${S^{(6)}}_{(1,1)}^{(j,2i-1)}={S^{(6)}}_{(1,p)}^{(j,2i-1)}$ for any $i$ and $j$, the matrix $S^{(6)}$ is degenerate.
	
	When $p$ is even and $q$ is odd, the objects in $\mathcal{C}_6$ are of the form $(2i-1,j)$ with $1\leq i\leq\frac{p}{2}$ and $1\leq j\leq\frac{q-1}{2}$.
	
	
	Following a similar calculation to that above, the equality ${S^{(6)}}_{(1,1)}^{(2i-1,j)}={S^{(6)}}_{(1,p)}^{(2i-1,j)}$ holds for all $i$ and $j$, which entails that the matrix $S^{(6)}$ is also degenerate.
	
	When $p$ and $q$ are odd, the objects in $\mathcal{C}_6$ are of the form $(2i-1,2j-1)$ with $1\leq i\leq\frac{p-1}{2}$ and $1\leq j\leq\frac{q-1}{2}$. We have the entries
	\begin{equation*}
		{S^{(6)}}_{(2i-1,2j-1)}^{(2i'-1,2j'-1)}=-\sqrt{\frac{8}{pq}}\sin(\frac{(2i-1)(2i'-1)q\pi}{p})\sin(\frac{(2j-1)(2j'-1)p\pi}{q}),
	\end{equation*}
	and consider matrix ${S^{(6)}}^{\mathbf{T}}S^{(6)}$. For $(i,j)\neq(i',j')$,
	\begin{align*}
		({S^{(6)}}^{\mathbf{T}}S^{(6)})_{(2i-1,2j-1)}^{(2i'-1,2j'-1)} & =\sum_{k,l}S_{(2i-1,2j-1)}^{(2k-1,2l-1)}S_{(2i'-1,2j'-1)}^{(2k-1,2l-1)} \\
		& = \frac{2}{pq}\left[\frac{\sin((p-1)\theta_{i-i'})}{2\sin(\theta_{i-i'})}-\frac{\sin((p-1)\theta_{i+i'-1})}{2\sin(\theta_{i+i'-1})}\right]\sum_{l}\sin(\eta_{j,l})\sin(\eta_{j',l})\\
		& = \frac{2}{pq}(-\frac{1}{2}-(-\frac{1}{2}))\sum_{l} \sin(\eta_{j,l})\sin(\eta_{j',l})\\
		& = 0,
	\end{align*}
	where $\theta_x=\frac{2xq\pi}{p}$ and $\eta_{y,z}=\frac{(2y-1)(2z-1)p\pi}{q}$. And it follows that
	\begin{align*}
		({S^{(6)}}^{\mathbf{T}}S^{(6)})_{(2i-1,2j-1)}^{(2i-1,2j-1)} & = \frac{8}{pq}\sum_{k,l}\sin^2(\frac{(2i-1)(2k-1)q\pi}{p})\sin^2(\frac{(2j-1)(2l-1)p\pi}{q})\\
		& = \frac{8}{pq}\frac{p}{4}\sum_{l}\sin^2(\frac{(2j-1)(2l-1)p\pi}{q})\\
		& = \frac{1}{2}.
	\end{align*}
	Thus, the subcategory $\mathcal{C}_6$ is modular when $p$ and $q$ are odd. 
\end{proof}

\begin{theorem}\label{thm3}
	In category $\mathcal{C}_{p,q}$, when $p$ is even and $q$ is odd with $p\geq4$, among all premodular subcategories (as specified in Proposition \ref{prop1}), only $\mathcal{C}_2$ and $\mathcal{C}_3$ are modular tensor categories. On the other hand, when $p$ is odd and $q$ is even with $p\geq5$, only $\mathcal{C}_1$ and $\mathcal{C}_4$ admit modular tensor category structures.
\end{theorem}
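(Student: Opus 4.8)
The plan is to dispatch $\mathcal{C}_5$ and $\mathcal{C}_6$ by Proposition \ref{prop2} and reduce the modularity of each of $\mathcal{C}_1,\mathcal{C}_2,\mathcal{C}_3,\mathcal{C}_4$ to the nondegeneracy of an explicit ``sine matrix''. Since the braiding and twist of a ribbon subcategory are inherited from the ambient category, the $S$-matrix of $\mathcal{C}_i$ is the restriction of the $S$-matrix of $\mathcal{C}_{p,q}$ to $\mathrm{Irr}(\mathcal{C}_i)$; hence $\mathcal{C}_i$ is modular iff this submatrix is nondegenerate. I would record the full genus-one entry underlying the fragments used in Proposition \ref{prop2},
\begin{equation*}
S_{(m,n),(m',n')}=\sqrt{\tfrac{8}{pq}}\,(-1)^{1+mn'+m'n}\sin\!\left(\tfrac{mm'q\pi}{p}\right)\sin\!\left(\tfrac{nn'p\pi}{q}\right),
\end{equation*}
and use that $\sin(q\pi/p)$ and $\sin(p\pi/q)$ are nonzero because $\gcd(p,q)=1$ and $p\geq4$.

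For $\mathcal{C}_1$ (all $m=m'=1$) the factor $\sin(q\pi/p)$ is a nonzero constant and the sign $(-1)^{1+n+n'}=-(-1)^n(-1)^{n'}$ splits as a diagonal conjugation; thus nondegeneracy of $S^{(1)}$ is equivalent to that of $F_q^{(p)}:=\big[\sin(\pi p n n'/q)\big]_{n,n'=1}^{q-1}$, and symmetrically $\mathcal{C}_3$ reduces to $F_p^{(q)}$. For $\mathcal{C}_2$ and $\mathcal{C}_4$ the indices run over odd values, so $n+n'$ (resp.\ $m+m'$) is even and the sign collapses to the constant $-1$; these reduce to the odd-index matrices $O_q^{(p)}:=\big[\sin(\pi p\mu\nu/q)\big]_{\mu,\nu\ \mathrm{odd}}$ and $O_p^{(q)}$.

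The heart of the argument is then two parity dichotomies, valid for $\gcd(a,N)=1$. First, $F_N^{(a)}$ is nondegenerate iff $a$ is odd: when $a$ is even (forcing $N$ odd by coprimality), the reflection identity $\sin(\pi a n(N-n')/N)=-(-1)^{an}\sin(\pi a n n'/N)$ shows that columns $n'$ and $N-n'$ are negatives of one another, so the matrix is degenerate; when $a$ is odd I would prove $F_N^{(a)}(F_N^{(a)})^{\mathbf T}=\tfrac N2 I$ by product-to-sum, using $\gcd(a,2N)=1$ to evaluate the resulting cosine sums. Second, $O_N^{(a)}$ is nondegenerate iff $N$ is odd: when $N$ is even (so $a$ is odd by coprimality) the same identity gives $\sin(\pi a\mu(N-\nu)/N)=\sin(\pi a\mu\nu/N)$ for odd $\mu,\nu$, so columns $\nu$ and $N-\nu$ now coincide and the matrix is degenerate (the hypotheses $p\geq4,\,p\geq5$ guarantee $N\geq4$, hence at least one genuine reflection pair of distinct odd indices); when $N$ is odd I would establish $O_N^{(a)}(O_N^{(a)})^{\mathbf T}=\tfrac N4 I$, the key point being that $\mu+\kappa=N$ is impossible for odd $\mu,\kappa$ with $N$ odd, so the residue $\mu+\kappa\equiv0\ (\mathrm{mod}\ N)$ never contaminates the off-diagonal and the cross terms cancel by parity.

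Applying these with $(a,N)=(p,q)$ for $\mathcal{C}_1,\mathcal{C}_2$ and $(q,p)$ for $\mathcal{C}_3,\mathcal{C}_4$ yields: $\mathcal{C}_1$ nondegenerate iff $p$ is odd, $\mathcal{C}_3$ iff $q$ is odd, $\mathcal{C}_2$ iff $q$ is odd, and $\mathcal{C}_4$ iff $p$ is odd. In the mixed-parity regime exactly one of $p,q$ is even, so when $p$ is even and $q$ is odd only $\mathcal{C}_2,\mathcal{C}_3$ survive, while when $p$ is odd and $q$ is even only $\mathcal{C}_1,\mathcal{C}_4$ survive; combined with the non-modularity of $\mathcal{C}_5,\mathcal{C}_6$ from Proposition \ref{prop2}, this gives the theorem. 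I expect the main obstacle to be the two orthogonality computations in the nondegenerate cases --- in particular tracking the parity of $a(\mu\pm\kappa)$ and the coprimality conditions that decide whether a cosine sum collapses to $-\tfrac12$ or to the full diagonal value, since a single misidentified residue class would spuriously create or destroy degeneracy.
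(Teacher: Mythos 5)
Your proposal is correct and follows essentially the same route as the paper: restrict the genus-one $S$-matrix to each subcategory, strip off the constant sine factor and the sign (as a diagonal conjugation), prove nondegeneracy of $\mathcal{C}_2,\mathcal{C}_3$ (resp.\ $\mathcal{C}_1,\mathcal{C}_4$) by the same product-to-sum orthogonality computation giving $\tfrac{q}{4}I$ and $\tfrac{p}{2}I$, detect degeneracy of the remaining two by the same reflection identity (the paper exhibits it as the coincidence of the first and last rows), and quote Proposition \ref{prop2} for $\mathcal{C}_5,\mathcal{C}_6$. The only difference is organizational: you package the four cases into two parity-dichotomy lemmas for $F_N^{(a)}$ and $O_N^{(a)}$ that treat both parity regimes uniformly, where the paper works out one regime and asserts the other "by the same method".
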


\begin{proof}
	When $p$ is even and $q$ is odd with $p\geq4$, since the subcategories inherit the premodular tensor category structure from $\mathcal{C}_{p,q}$, it suffices to prove that the $S$-matrices $S^{(2)}$ and $S^{(3)}$ are non-degenerate, whereas the other ones are degenerate.
	
	For $\mathcal{C}_2 = \{(1,1), (1,3),\ldots, (1,q-2)\}$, the $(i,j)\text{-}$entry of the matrix $S^{(2)}$ is given by the formula $(4.5)$ in \cite{DJX} as follows:
	\begin{equation*}
		S^{(2)}_{i,j} = -\sqrt{\frac{8}{pq}}\sin(\frac{q\pi}{p})\sin(\frac{(2i-1)(2j-1)p\pi }{q}).
	\end{equation*}
	Let $M^{(2)}_{i,j}=\sin(\frac{(2i-1)(2j-1)p\pi}{q})$. It suffices to prove that $M^{(2)}$ is non-degenerate. Consider matrix $ N^{(2)}:=(M^{(2)})^{\mathbf{T}}M^{(2)} $. For $j\neq k$,
	\begin{equation*}\label{eq12}
		\begin{aligned}
			N_{j,k}^{(2)}&=\sum_{i=1}^{\frac{q-1}{2}} \sin(\frac{(2i-1)(2j-1)p\pi}{q})\sin(\frac{(2i-1)(2k-1)p\pi }{q})\\ &=\frac{1}{2}\left[\sum_{i=1}^{l}\cos(\frac{(2i-1)(2j-2k)p\pi}{q})-\sum_{i=1}^{l}\cos(\frac{(2i-1)(2j+2k-2)p\pi}{q})\right]\\ &=\frac{1}{2}\left[\frac{\sin((q-1)\frac{(2j-2k)p\pi}{q})}{2\sin(\frac{(2j-2k)p\pi}{q})}-\frac{\sin((q-1)\frac{(2j-2k)p\pi}{q})}{2\sin(\frac{(2j+2k-2)p\pi}{q})}\right]\\
			&=\frac{1}{2}(-\frac{1}{2}-(-\frac{1}{2}))\\
			&=0,
		\end{aligned}
	\end{equation*}
	where the third equality holds because $\sin\theta \left ( \sum_{i=1}^{l}\cos(2i-1)\theta   \right ) =\frac{1}{2} \sin2l\theta $. For $j=k$, 
	\begin{equation*}\label{eq13}
		\begin{aligned}
			N_{j,j}^{(2)}&=\sum_{i=1}^{\frac{q-1}{2}} \sin^{2}(\frac{(2i-1)(2j-1)p\pi }{q})\\
			&=\frac{1}{2}\sum_{i=1}^{\frac{q-1}{2}}\left(1-\cos((2i-1)\frac{2(2j-1)p\pi}{q})\right)\\
			&=\frac{q-1}{4}-\frac{1}{2}\frac{\sin((q-1)\frac{2(2j-1)p\pi}{q})}{2\sin(\frac{2(2j-1)p\pi}{q})}\\
			&=\frac{q}{4}.
		\end{aligned}
	\end{equation*}
	Thus, $M^{(2)}$ is non-degenerate, and $\mathcal{C}_2$ is a modular tensor category. 
	
	For $\mathcal{C}_3 = \{(1,1), (2,1), (3,1),\ldots, (p-1,1)\}$, the $(i,j)\text{-}$entry of the matrix $S^{(3)}$ is also given by the formula $(4.5)$ in \cite{DJX} as follows:
	$$
	S^{(3)}_{i,j} = \sqrt{\frac{8}{pq}}(-1)^{i+j+1}\sin(\frac{ijq\pi }{p})\sin(\frac{p\pi }{q}).
	$$
	Let $M^{(3)}_{i,j}=(-1)^{i+j+1}\sin(\frac{ijq\pi }{p})$. It suffices for us to prove the non-degeneracy of $M^{(3)}$. Similarly, we consider matrix $N^{(3)}:=(M^{(3)})^{\mathbf{T}}M^{(3)}$. For $j\neq k$, we have
	\begin{equation*}\label{eq14}
		\begin{aligned}
			N^{(3)}_{j,k}&=(-1)^{j+k}\sum_{i=1}^{p-1}\sin(\frac{ijq\pi}{p})\sin(\frac{ikq\pi}{p})\\
			&=(-1)^{j+k}\frac{1}{2}\left[\sum_{i=1}^{p-1}\cos(\frac{i(j-k)q\pi}{p})-\sum_{i=1}^{p-1}\cos(\frac{i(j+k)q\pi}{p})\right]\\
			&=(-1)^{j+k}\frac{1}{4}\left[\frac{\sin((p-\frac{1}{2})\frac{(j-k)q\pi}{p})}{\sin(\frac{(j-k)q\pi}{2p})}-\frac{\sin((p-\frac{1}{2})\frac{(j+k)q\pi}{p})}{\sin(\frac{(j+k)q\pi}{2p})}\right]\\
			&=0,
		\end{aligned}
	\end{equation*}
	where the third equality holds because $\sin\frac{\theta}{2} \left ( \sum_{i=1}^{2l-1}\cos i\theta   \right ) =\frac{1}{2} (\sin(2l-\frac{1}{2})\theta-\sin\frac{\theta}{2}) $. For $ j=k $,
	\begin{equation*}\label{eq15}
		\begin{aligned}
			N_{j,j}^{(3)}&=\sum_{i=1}^{p-1} \sin^{2}(\frac{ijq\pi }{p})\\
			&=\frac{1}{2}\sum_{i=1}^{p-1}\left(1- \cos(\frac{2ijq\pi }{p})\right) \\
			&=\frac{p-1}{2}-\frac{1}{2}\left(\frac{\sin(p-\frac{1}{2})\frac{2jq\pi}{p}}{2\sin\frac{jq\pi}{p}}-\frac{1}{2}\right)\\
			&=\frac{p}{2}.
		\end{aligned}
	\end{equation*}
	Thus, $M^{(3)}$ is non-degenerate, and $\mathcal{C}_3$ is a modular tensor category.
	
	For $\mathcal{C}_1 = \{(1,1), (1,2),(1,3),\ldots, (1,q-1)\} $, the $(i,j)\text{-}$entry of its $S$-matrix is:
	$$
	S^{(1)}_{i,j} = \sqrt{\frac{8}{pq}}(-1)^{i+j+1}\sin(\frac{q\pi }{p})\sin(\frac{ijp\pi }{q}).
	$$
	When $i=q-1$,
	\begin{equation*}
		\begin{aligned}
			S^{(1)}_{q-1,j}&=\sqrt{\frac{8}{pq}}(-1)^{j+1}\sin(\frac{q\pi}{p})\sin(\frac{(q-1)jp\pi}{q})\\
			&=\sqrt{\frac{8}{2l(2l+1)}}(-1)^{j}\sin(\frac{q\pi}{p})\sin(\frac{jp\pi}{q})\\
			&=S^{(1)}_{1,j}.
		\end{aligned}
	\end{equation*}
	This implies that the first row of the matrix $S^{(1)}$ is exactly the same as the last row. Therefore, $S^{(1)}$ is degenerate, and $\mathcal{C}_1$ is not a modular tensor category.
	
	For $\mathcal{C}_4 = \{(1,1), (3,1), (5,1),\ldots, (p-1,1)\}$, the $(i,j)\text{-}$entry of its $S$-matrix is:
	$$
	S^{(4)}_{i,j}=-\sqrt{\frac{8}{pq}}\sin(\frac{p\pi}{q})\sin(\frac{(2i-1)(2j-1)q\pi}{p}).
	$$
	When $i=\frac{p}{2}$,
	\begin{equation*}
		\begin{aligned}
			S^{(4)}_{\frac{p}{2},j} &= -\sqrt{\frac{8}{pq}}\sin(\frac{p\pi }{q})\sin(\frac{(p-1)(2j-1)q\pi }{p})=S^{(4)}_{1,j}.
		\end{aligned}
	\end{equation*}
	Then the first row of the matrix $S^{(4)}$ is exactly the same as the last row. Therefore, $S^{(4)}$ is degenerate, and $\mathcal{C}_4$ is not a modular tensor category. For $ \mathcal{C}_5, \mathcal{C}_6  $, see Proposition $\ref{prop2}$.
	
	Using the same method above and Proposition $\ref{prop2}$, we can prove that when $p$ is odd and $q$ is even with $p\geq5$, the matrices $S^{(2)}, S^{(3)},S^{(5)}$ and $S^{(6)}$ are degenerate, while the matrices $S^{(1)}$ and $S^{(4)}$ are non-degenerate. Therefore, when $p$ is odd and $q$ is even with $p\geq5$, only $\mathcal{C}_1$ and $\mathcal{C}_4$ admit modular tensor category structures.
\end{proof}

\begin{theorem}\label{thm4}
	In category $\mathcal{C}_{p,q}$, when $p$ and $q$ are odd with $p\geq5$, all premodular subcategories (as specified in Proposition \ref{prop1}) $\mathcal{C}_i$ $(i=1,...,6)$ are modular tensor categories.
\end{theorem}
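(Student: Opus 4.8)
The plan is to split the six subcategories into three groups. For $\mathcal{C}_5$ and $\mathcal{C}_6$ there is nothing left to do, since Proposition \ref{prop2} already shows both are modular when $p$ and $q$ are odd. For the remaining four I would exploit the symmetry between $p$ and $q$ in the genus-one $S$-matrix: up to a nonzero scalar the entries factor as a product of a sine in the modulus $p$ and a sine in the modulus $q$, and interchanging these two roles carries the horizontal data of $\mathcal{C}_1,\mathcal{C}_2$ to the vertical data of $\mathcal{C}_3,\mathcal{C}_4$. Because $p$ and $q$ are both odd, the hypotheses needed on one side remain available after the swap, so it suffices to prove non-degeneracy for one representative of each pair, say $\mathcal{C}_2$ and $\mathcal{C}_1$, and to read off $\mathcal{C}_4$ and $\mathcal{C}_3$ by exchanging $p$ and $q$.

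For $\mathcal{C}_2=\{(1,1),(1,3),\dots,(1,q-2)\}$ I would simply reuse the computation already carried out for $\mathcal{C}_2$ in the proof of Theorem \ref{thm3}. There the relevant matrix is $M^{(2)}_{i,j}=\sin(\frac{(2i-1)(2j-1)p\pi}{q})$ with $1\le i,j\le\frac{q-1}{2}$, and the evaluation of $(M^{(2)})^{\mathbf T}M^{(2)}$ through the identity $\sin\theta\sum_{i=1}^{l}\cos((2i-1)\theta)=\frac12\sin(2l\theta)$ uses only that $q$ is odd, the factor $\sin(\frac{q\pi}{p})$ being a harmless nonzero constant. Hence $\mathcal{C}_2$ is modular here as well, and its $p\leftrightarrow q$ analogue gives that $\mathcal{C}_4$ (whose entries involve $\sin(\frac{(2i-1)(2j-1)q\pi}{p})$ and require only $p$ odd) is modular.

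The genuinely new computation is for $\mathcal{C}_1=\{(1,1),(1,2),\dots,(1,q-1)\}$, with $S^{(1)}_{i,j}=\sqrt{\frac{8}{pq}}(-1)^{i+j+1}\sin(\frac{q\pi}{p})\sin(\frac{ijp\pi}{q})$ from formula $(4.5)$ of \cite{DJX}. Setting $M^{(1)}_{i,j}=(-1)^{i+j+1}\sin(\frac{ijp\pi}{q})$ and forming $N^{(1)}:=(M^{(1)})^{\mathbf T}M^{(1)}$, the sign $(-1)^{i+j+1}$ collapses and $N^{(1)}_{j,k}=(-1)^{j+k}\sum_{i=1}^{q-1}\sin(\frac{ijp\pi}{q})\sin(\frac{ikp\pi}{q})$. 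Rewriting the product as $\frac12[\cos(\frac{i(j-k)p\pi}{q})-\cos(\frac{i(j+k)p\pi}{q})]$ reduces everything to evaluating $C(m):=\sum_{i=1}^{q-1}\cos(\frac{imp\pi}{q})$. A geometric-sum computation, using $\gcd(p,2q)=1$ together with $e^{imp\pi}=(-1)^{mp}=(-1)^{m}$ because $p$ is odd, should yield $C(0)=q-1$, $C(m)=-1$ for even $m\not\equiv 0$, and $C(m)=0$ for odd $m$. Substituting $m=j\pm k$, whose values share the same parity, then gives $N^{(1)}_{j,j}=\frac12[(q-1)-(-1)]=\frac{q}{2}$ and $N^{(1)}_{j,k}=0$ for $j\ne k$, so $N^{(1)}=\frac{q}{2}I$ and $\mathcal{C}_1$ is modular; the identical argument with $p$ and $q$ interchanged, which now requires $q$ odd, settles $\mathcal{C}_3$.

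The main obstacle is precisely this evaluation of $C(m)$. In Theorem \ref{thm3}, for $p$ even and $q$ odd, the same subcategory $\mathcal{C}_1$ came out degenerate, with the first and last rows of $S^{(1)}$ coinciding; the reversal of that conclusion here rests entirely on the step $(-1)^{mp}=(-1)^{m}$, valid only because $p$ is now odd. One therefore has to track carefully the parities of $p$, of $j\pm k$, and of the number $q-1$ of summands, since it is the interplay of these parities, rather than any deeper structure, that toggles $C(m)$ between the values forcing $N^{(1)}$ to be a nonzero scalar matrix and those that would make rows collapse. The sign bookkeeping from the $(-1)^{i+j+1}$ prefactor, though routine, must likewise be handled consistently so that it cancels in $N^{(1)}$ rather than contaminating the diagonal.
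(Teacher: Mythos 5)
Your proposal is correct and follows essentially the same route as the paper: cite Proposition \ref{prop2} for $\mathcal{C}_5,\mathcal{C}_6$, reuse the Theorem \ref{thm3} computations for $\mathcal{C}_2,\mathcal{C}_3$, prove $(M^{(1)})^{\mathbf T}M^{(1)}=\frac{q}{2}I$ for $\mathcal{C}_1$, and obtain $\mathcal{C}_4$ by exchanging $p$ and $q$. The only (immaterial) difference is that you evaluate the cosine sums $C(m)$ via a complex geometric series where the paper uses the equivalent Dirichlet-kernel trigonometric identity.
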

\begin{proof}
	The non-degeneracy of matrices $S^{(2)}$ and $S^{(3)}$ is exactly the same as that proved in Theorem $\ref{thm3}$, while the non-degeneracy of matrices $S^{(5)}$ and $S^{(6)}$ can be found in Proposition $\ref{prop2}$. It then suffices to prove that the matrices $S^{(1)}$ and $S^{(4)}$ are also non-degenerate.
	
	For $\mathcal{C}_1 = \{(1,1), (1,2),(1,3),\ldots, (1,q-1)\} $, let $M^{(1)}_{i,j}=(-1)^{i+j+1}\sin(\frac{ijp\pi }{q})$. We consider matrix $N^{(1)}:=(M^{(1)})^{\mathbf{T}}M^{(1)}$. For $j\neq k$, we have
	\begin{equation*}\label{eq14}
		\begin{aligned}
			N^{(1)}_{j,k}&=(-1)^{j+k}\sum_{i=1}^{q-1}\sin(\frac{ijp\pi}{q})\sin(\frac{ikp\pi}{q})\\
			&=(-1)^{j+k}\frac{1}{2}\left[\sum_{i=1}^{q-1}\cos(\frac{i(j-k)p\pi}{q})-\sum_{i=1}^{q-1}\cos(\frac{i(j+k)p\pi}{q})\right]\\
			&=(-1)^{j+k}\frac{1}{4}\left[\frac{\sin((q-\frac{1}{2})\frac{(j-k)p\pi}{q})}{\sin(\frac{(j-k)p\pi}{2q})}-\frac{\sin((q-\frac{1}{2})\frac{(j+k)p\pi}{q})}{\sin(\frac{(j+k)p\pi}{q})}\right]\\
			&=0.
		\end{aligned}
	\end{equation*}
	For $ j=k $,
	\begin{equation*}\label{eq15}
		\begin{aligned}
			N_{j,j}^{(1)}&=\sum_{i=1}^{q-1} \sin^{2}(\frac{ijp\pi }{q})\\
			&=\frac{1}{2}\sum_{i=1}^{q-1}\left(1- \cos(\frac{2ijp\pi }{q})\right) \\
			&=\frac{q-1}{2}-\frac{1}{2}\left(\frac{\sin(q-\frac{1}{2})\frac{2jp\pi}{q}}{2\sin\frac{jp\pi}{q}}-\frac{1}{2}\right)\\
			&=\frac{q}{2}.
		\end{aligned}
	\end{equation*}
	Thus, $S^{(1)}$ is non-degenerate. 
	
	As for the non-degeneracy of $S^{(4)}$, it can be adapted from the proof of the non-degeneracy of $S^{(2)}$; one only needs to exchange $p$ and $q$.
\end{proof}

It is known that $\text{qdim}_{L(c_{p,q},0)}\,(m,n)=\text{dim}\,(m,n)$ in \cite{DLN} and $\text{qdim}_{L(c_{p,q},0)}\,(m,n)=\\ \text{FPdim}\,(m,n)$ in \cite{DJX} for any simple object $(m,n)$. Then we have that
\begin{equation*}\label{eq0.16}
	\text{FPdim}\,(m,n)=\text{dim}\,(m,n).
\end{equation*}
Furthermore, it follows that $\text{FPdim}\,\mathcal{C}_{p,q}=\text{dim}\,\mathcal{C}_{p,q}$.

\begin{proposition}
	When $p$ is odd and $q$ is even, the category $\mathcal{C}_{p,q}$ is isomorphic to Deligne's product $\mathcal{C}_1\boxtimes\mathcal{C}_4$; when $p$ is even and $q$ is odd, it is isomorphic to Deligne's product $\mathcal{C}_2\boxtimes\mathcal{C}_3$; and when $p$ and $q$ are both odd, it is isomorphic to Deligne's product $\mathcal{C}_2\boxtimes\mathcal{C}_4\boxtimes\mathcal{C}_5$.
\end{proposition}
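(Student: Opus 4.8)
The plan is to realise each factorisation as an instance of Müger's splitting theorem for modular subcategories: if $\mathcal{D}$ is a modular tensor subcategory of a modular tensor category $\mathcal{C}$, then the tensor-product functor induces a ribbon braided equivalence $\mathcal{D}\boxtimes C_{\mathcal{C}}(\mathcal{D})\xrightarrow{\sim}\mathcal{C}$, the M\"uger centralizer $C_{\mathcal{C}}(\mathcal{D})$ is again modular, and $\mathrm{FPdim}\,\mathcal{C}=\mathrm{FPdim}\,\mathcal{D}\cdot\mathrm{FPdim}\,C_{\mathcal{C}}(\mathcal{D})$ (see \cite{M,EGNO}). More convenient in practice is the two-sided corollary: if $\mathcal{D}_1,\mathcal{D}_2$ are modular subcategories that centralize one another and satisfy $\mathcal{D}_1\cap\mathcal{D}_2=\mathrm{Vec}_{\mathbb{C}}$ and $\mathcal{D}_1\vee\mathcal{D}_2=\mathcal{C}$, then $\mathcal{D}_1\boxtimes\mathcal{D}_2\xrightarrow{\sim}\mathcal{C}$; indeed $\mathrm{FPdim}$-multiplicativity forces $\mathcal{D}_2=C_{\mathcal{C}}(\mathcal{D}_1)$. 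The engine that verifies the centralizing hypothesis is the observation that the unnormalized $S$-matrix of $\mathcal{C}_{p,q}$ separates into a $p$-part and a $q$-part, $\tilde S_{(m,n),(m',n')}=\pm\sqrt{\tfrac{8}{pq}}\,\sin(\tfrac{mm'q\pi}{p})\sin(\tfrac{nn'p\pi}{q})$, with the sign depending only on the parities of $m+m'$ and $n+n'$, so that the transparency criterion $\tilde S_{X,Y}\,\tilde S_{\mathbf 1,\mathbf 1}=\tilde S_{\mathbf 1,X}\,\tilde S_{\mathbf 1,Y}$ becomes a short sign check whenever $X$ and $Y$ are pure in complementary index directions.

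First I would treat the two mixed-parity cases. Suppose $p$ is odd and $q$ is even. By Theorem \ref{thm3} both $\mathcal{C}_1$ (first index $\equiv 1$) and $\mathcal{C}_4$ (second index $\equiv 1$, first index odd) are modular. Since every object of $\mathcal{C}_4$ has second index $1$ and every object of $\mathcal{C}_1$ has first index $1$, the factorised form gives $\tilde S_{X,Y}\tilde S_{\mathbf 1,\mathbf 1}=\tilde S_{\mathbf 1,X}\tilde S_{\mathbf 1,Y}$ for all $X\in\mathcal{C}_4$, $Y\in\mathcal{C}_1$, so their mutual monodromy is trivial and they centralize each other. Their only common simple object is $(1,1)$, so $\mathcal{C}_1\cap\mathcal{C}_4=\mathrm{Vec}_{\mathbb{C}}$, while the generation table for this parity in Proposition \ref{prop1} gives $\mathcal{C}_1\vee\mathcal{C}_4=\mathcal{C}_{p,q}$; the corollary then yields $\mathcal{C}_{p,q}\cong\mathcal{C}_1\boxtimes\mathcal{C}_4$. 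The case $p$ even, $q$ odd is entirely symmetric: Theorem \ref{thm3} makes $\mathcal{C}_2$ and $\mathcal{C}_3$ modular, they centralize by the same computation with the two indices interchanged, meet only in the unit, and generate $\mathcal{C}_{p,q}$ by the corresponding generation table, giving $\mathcal{C}_{p,q}\cong\mathcal{C}_2\boxtimes\mathcal{C}_3$.

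For $p,q$ both odd I would bootstrap through $\mathcal{C}_6$. By Theorem \ref{thm4} all of $\mathcal{C}_2,\mathcal{C}_4,\mathcal{C}_5,\mathcal{C}_6$ are modular. The same centralizing computation applies to $\mathcal{C}_2$ and $\mathcal{C}_4$, and Table \ref{tb1} gives $\mathcal{C}_2\vee\mathcal{C}_4=\mathcal{C}_6$ with $\mathcal{C}_2\cap\mathcal{C}_4=\mathrm{Vec}_{\mathbb{C}}$, so $\mathcal{C}_6\cong\mathcal{C}_2\boxtimes\mathcal{C}_4$. It then remains to split off the simple-current subcategory $\mathcal{C}_5=\{(1,1),(p-1,1)\}$. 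Here I would check from the factorised $S$-matrix that the invertible object $(p-1,1)$ has trivial monodromy with every $(2i-1,2j-1)\in\mathcal{C}_6$, so $\mathcal{C}_5$ centralizes $\mathcal{C}_6$; since $p-1$ is even the two subcategories meet only in the unit, and the simple-current orbit shows $\mathcal{C}_6\vee\mathcal{C}_5=\mathcal{C}_{p,q}$, because tensoring $\mathcal{C}_6$ by $(p-1,1)$ produces exactly the simples of even first index, doubling the $(p-1)(q-1)/4$ objects of $\mathcal{C}_6$ to the full $(p-1)(q-1)/2$. Hence $\mathcal{C}_{p,q}\cong\mathcal{C}_6\boxtimes\mathcal{C}_5\cong\mathcal{C}_2\boxtimes\mathcal{C}_4\boxtimes\mathcal{C}_5$.

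The main obstacle is the centralizing verification rather than anything categorical: one must track the parity-dependent signs in the factorised $S$-matrix carefully and keep in mind the identification $(m,n)\sim(p-m,q-n)$, which is precisely what makes the mixed-parity and all-odd cases behave differently. The delicate sub-case is $\mathcal{C}_5$ against $\mathcal{C}_6$, where a factor $\sin(\tfrac{(q-1)(2j-1)p\pi}{q})$ must be reduced modulo $\pi$ using the parity of $p$ before triviality of the monodromy becomes visible. One should also record the automatic point that trivial mutual monodromy forces $\theta_{X\otimes Y}=\theta_X\theta_Y$, so the tensor-product functor is a \emph{ribbon} functor and the equivalences above are equivalences of modular tensor categories, not merely of braided fusion categories; this is exactly the content packaged by M\"uger's theorem. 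Finally, the counting identities $\lvert\mathrm{Irr}\,\mathcal{C}_1\rvert\cdot\lvert\mathrm{Irr}\,\mathcal{C}_4\rvert=\lvert\mathrm{Irr}\,\mathcal{C}_{p,q}\rvert$ and their analogues furnish an independent consistency check, and the equality $\mathrm{FPdim}=\dim$ established above lets one replace the appeal to the generation tables by a direct dimension count if desired.
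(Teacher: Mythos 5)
Your proposal is correct and follows essentially the same route as the paper: both rest on M\"uger's theorem that a modular subcategory splits off as a Deligne factor together with its centralizer, with the centralizing condition verified through the factorised $S$-matrix and confirmed by the dimension count $\mathrm{FPdim}\,\mathcal{C}_{p,q}=\mathrm{FPdim}\,\mathcal{D}_1\cdot\mathrm{FPdim}\,\mathcal{D}_2$. The only (immaterial) divergence is in the all-odd case, where you factor through $\mathcal{C}_6\cong\mathcal{C}_2\boxtimes\mathcal{C}_4$ and then split off $\mathcal{C}_5$, whereas the paper first writes $\mathcal{C}_{p,q}\cong\mathcal{C}_1\boxtimes\mathcal{C}_4$ and then decomposes $\mathcal{C}_1\cong\mathcal{C}_2\boxtimes\mathcal{C}_5$ by identifying $\mathcal{C}_5$ as the centralizer of $\mathcal{C}_2$ inside $\mathcal{C}_1$.
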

\begin{proof}
	When $p$ is odd and $q$ is even, $\mathcal{C}_1$ and $\mathcal{C}_4$ are modular tensor categories by Theorem $\ref{thm3}$. It is obvious that there are embeddings of $\mathcal{C}_1$ and $\mathcal{C}_4$ into $\mathcal{C}_{p,q}$, respectively. It is easy to check that $\mathcal{C}_4$ is the centralizer of $\mathcal{C}_1$ in $\mathcal{C}_{p,q}$. According to \cite{DMNO}\cite{M}, the category $\mathcal{C}_{p,q}$ is isomorphic to Deligne's product $\mathcal{C}_1\boxtimes\mathcal{C}_4$. And we have
	\begin{equation*}\label{eq0.17}
		\text{FPdim}(\mathcal{C}_{p,q})=\frac{pq}{8\sin^2\frac{\pi}{p}\sin^2\frac{\pi}{q}}=\frac{q}{2\sin^2\frac{\pi}{q}}\cdot\frac{p}{4\sin^2\frac{\pi}{p}}=\text{FPdim}(\mathcal{C}_1)\text{FPdim}(\mathcal{C}_4).
	\end{equation*}
	
	When $p$ is even and $q$ is odd, this is similar to the previous case; we also have that $\mathcal{C}_{p,q}\cong\mathcal{C}_2\boxtimes\mathcal{C}_3$ and
	\begin{equation*}\label{eq0.18}
		\text{FPdim}(\mathcal{C}_{p,q})=\frac{pq}{8\sin^2\frac{\pi}{p}\sin^2\frac{\pi}{q}}=\frac{q}{4\sin^2\frac{\pi}{q}}\cdot\frac{p}{2\sin^2\frac{\pi}{p}}=\text{FPdim}(\mathcal{C}_2)\text{FPdim}(\mathcal{C}_3).
	\end{equation*}
	
	When $p$ and $q$ are odd, according to the previous results, we can conclude that the category $\mathcal{C}_{p,q}\cong\mathcal{C}_1\boxtimes\mathcal{C}_4$ and $\mathcal{C}_{p,q}\cong\mathcal{C}_2\boxtimes\mathcal{C}_3$. And it follows that $\mathcal{C}_5$ is the centralizer of $\mathcal{C}_2$ in $\mathcal{C}_{1}$ and $\mathcal{C}_5$ is the centralizer of $\mathcal{C}_4$ in $\mathcal{C}_{3}$. Thus, we have $\mathcal{C}_{p,q}\cong\mathcal{C}_1\boxtimes\mathcal{C}_4\cong\mathcal{C}_2\boxtimes\mathcal{C}_5\boxtimes\mathcal{C}_4$ and
	\begin{equation*}\label{eq0.17}
		\text{FPdim}(\mathcal{C}_{p,q})=2\cdot\frac{q}{4\sin^2\frac{\pi}{q}}\cdot\frac{p}{4\sin^2\frac{\pi}{p}}=\text{FPdim}(\mathcal{C}_5)\text{FPdim}(\mathcal{C}_2)\text{FPdim}(\mathcal{C}_4).
	\end{equation*}
\end{proof}

\begin{remark}\label{rmk2.7}\em{
	For each rank, there is an example of  modular tensor category arising from the unitary series (some subcategory of $\mathcal{C}_{p,p+1}$). }
\end{remark}

\begin{remark}\label{not possible}\em{
There does not exist any modular subcategory $\mathcal{D}$ of Virasoro type for which $\text{ord}(T_{\mathcal{D}})=2$. Whether it is $\mathcal{C}_1, \mathcal{C}_4$ or $\mathcal{C}_2,\mathcal{C}_3$—even for $\mathcal{C}$—there exists some object in them whose conformal weight does not belong to $\frac{1}{2}+\mathbb{Z}$. The modular tensor categories with Frobenius-Schur exponent 2 were classified in \cite{WW} in the form of Deligne products of the pointed modular tensor categories. And the pointed modular tensor categories are realized in \cite{N,DNR}. }
\end{remark}

We have already identified all modular tensor subcategories of $\mathcal{C}_{p,q}$. Recalling the results in \cite{DLN}, each such modular tensor category $\mathcal{D}$ gives rise to a projective representation $\bar{\rho}$ of the modular group $\text{SL}_{2}(\mathbb{Z})$ with the Frobenius-Schur exponent $\text{FSexp}(\mathcal{D})=\text{ord}(\bar{\rho}(T))$, and $\bar{\rho}$ admits a lifting whose kernel is a congruence subgroup of level $n:=\text{ord}(\rho(T))$. However, since these modular tensor subcategories are not module categories of a vertex operator algebra, we cannot obtain relevant results about the modular invariance of vertex operator algebras.

If $\mathcal{C}=\mathcal{D}_1\boxtimes\mathcal{D}_2$, and for any simple object $Z\in\mathcal{C}$, there exist simple objects $X\in\mathcal{D}_1, Y\in\mathcal{D}_2$, such that $Z=X\boxtimes Y$, then we have $\theta_Z=\theta_{X\boxtimes Y}=\theta_X\cdot\theta_Y$. The Deligne product of $\mathcal{C}_{p,q}$ under our discussion satisfies exactly the above condition, then we have the following result.
\begin{remark}
  \em{The Frobenius-Schur exponent of $\mathcal{C}_{p,q}$ is equal to the least common multiple of the Frobenius-Schur exponents of all subcategories in its Deligne product decomposition.}
\end{remark}

\section{Another approach to non-degeneracy}
In this section, we shall use a more categorical method to prove Theorem $\ref{thm3}$ and $\ref{thm4}$. It is well known that the premodular tensor category $\mathcal{C}$ is modular if and only if its Müger Center $\mathcal{C}'=<\mathbf{1}>$. It follows by definition that if $X\in\mathcal{C}$ belongs to $\mathcal{C}'$, then for any $Y\in\mathcal{C}$,
\begin{equation*}\label{eq1.1}
	\beta_{Y,X}\circ\beta_{X,Y}=id_{X\otimes Y}.
\end{equation*}
Without abuse of notation, we denote by $\theta_X$ the constant associated with the isomorphism involving the simple object $X$. According to the balanced equation
\begin{equation*}\label{eq1.2}
	\theta_{X\otimes Y}=(\theta_X\otimes\theta_Y)\circ\beta_{Y,X}\circ\beta_{X,Y}.
\end{equation*}
So if $X\in\mathcal{C}'$, then for any $Y\in\mathcal{C}$, it follows that
\begin{equation*}\label{eq1.4}
	\theta_{X\otimes Y}=\theta_X\cdot\theta_Y\cdot id_{X\otimes Y}.
\end{equation*}
By the semisimplicity of $\mathcal{C}$, we have
\begin{eqnarray}\label{eq1.5}
	\nonumber 
	\theta_{X\otimes Y} &=& \sum_{Z\in X\otimes Y}\theta_{Z}\cdot i_{Z}p_{Z}, \\
	\nonumber	id_{X\otimes Y} &=& \sum_{Z\in X\otimes Y}i_{Z}p_{Z}.
\end{eqnarray}
Thus, we can derive the following criterions. 
\begin{lemma}\label{lm1.1}
	If $X\in\mathcal{C}$ is contained in the Müger Center of $\mathcal{C}$, then for any $Y\in\mathcal{C}$, the following holds: 
	\begin{equation*}\label{eq1.6}
		\theta_{Z}=\theta_{X}\cdot\theta_{Y}
	\end{equation*}
	for any summand $Z$ of $X\otimes Y$.
\end{lemma}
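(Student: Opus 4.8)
The plan is to exploit an identity that the surrounding discussion has already put in place: the hypothesis $X\in\mathcal{C}'$ together with the balancing (ribbon) axiom forces $\theta_{X\otimes Y}=\theta_X\cdot\theta_Y\cdot id_{X\otimes Y}$ as an endomorphism of $X\otimes Y$. Since $X$ and $Y$ are simple, $\theta_X$ and $\theta_Y$ are honest scalars, so the right-hand side is a scalar multiple of the identity. The entire content of the lemma is then to compare this endomorphism against the spectral expansion of $\theta_{X\otimes Y}$ and to read off the scalar attached to each simple summand.

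First I would fix a decomposition of $X\otimes Y$ into simple objects, together with structure morphisms $i_Z\colon Z\hookrightarrow X\otimes Y$ and $p_Z\colon X\otimes Y\twoheadrightarrow Z$ chosen so that $p_{Z'}\circ i_Z=\delta_{Z,Z'}\,id_Z$ and $\sum_Z i_Z p_Z=id_{X\otimes Y}$. Because the twist is a natural isomorphism and each summand is simple, $\theta$ acts on $Z$ by the scalar $\theta_Z$; applying naturality to $i_Z$ gives $\theta_{X\otimes Y}\circ i_Z=i_Z\circ\theta_Z=\theta_Z\, i_Z$, which is exactly the displayed expansion $\theta_{X\otimes Y}=\sum_Z\theta_Z\, i_Z p_Z$. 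Substituting this expansion and the dual one for $id_{X\otimes Y}$ into the identity above yields $\sum_Z\theta_Z\, i_Z p_Z=\theta_X\theta_Y\sum_Z i_Z p_Z$.

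To isolate a fixed summand $Z_0$, I would compose this equation with $p_{Z_0}$ on the left and $i_{Z_0}$ on the right. The orthogonality relation $p_{Z_0}\circ i_Z=\delta_{Z,Z_0}\,id_{Z_0}$ collapses each side to a single surviving term, leaving $\theta_{Z_0}\,id_{Z_0}=\theta_X\theta_Y\,id_{Z_0}$ and hence $\theta_{Z_0}=\theta_X\cdot\theta_Y$. As $Z_0$ was an arbitrary simple summand of $X\otimes Y$, this is precisely the assertion.

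The coefficient comparison is essentially forced once the preliminary identity is in hand, so I do not expect a genuine obstacle; the only point demanding care is the bookkeeping of multiplicities. If a simple object $Z$ occurs several times in $X\otimes Y$, one must check that $\theta$ acts by the same scalar on each copy --- which is immediate, since any isomorphism between two copies intertwines their twists by naturality --- and that the extraction via $p_{Z_0}$ and $i_{Z_0}$ is carried out copy by copy (equivalently, over the whole isotypic block, on which $\theta_{X\otimes Y}$ is already scalar). With this understood the argument goes through verbatim.
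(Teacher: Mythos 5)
Your argument is correct and follows essentially the same route as the paper: the paper likewise derives $\theta_{X\otimes Y}=\theta_X\cdot\theta_Y\cdot id_{X\otimes Y}$ from the Müger-center condition and the balancing equation, expands both $\theta_{X\otimes Y}$ and $id_{X\otimes Y}$ as $\sum_Z \theta_Z\, i_Z p_Z$ and $\sum_Z i_Z p_Z$ by semisimplicity, and compares coefficients. Your explicit extraction via $p_{Z_0}(\cdot)i_{Z_0}$ and the remark on multiplicities only make precise what the paper leaves implicit.
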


\begin{corollary}\label{coro1.2}
	If there exist summands $Z_1,Z_2$ of $X\otimes Y$ such that $\theta_{Z_1}\neq\theta_{Z_2}$, then neither $X$ nor $Y$ is contained in the Müger Center $\mathcal{C}'$.
\end{corollary}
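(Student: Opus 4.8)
The plan is to obtain this statement as the contrapositive of Lemma \ref{lm1.1}, treating the two objects $X$ and $Y$ separately but by a symmetric argument. The whole point is that Lemma \ref{lm1.1} does not merely assert some relation among the twists: it forces \emph{every} summand $Z$ of $X\otimes Y$ to carry the \emph{same} twist value $\theta_X\cdot\theta_Y$. So a single pair of summands with distinct twists already rules out membership in $\mathcal{C}'$.

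First I would dispose of $X$. Suppose for contradiction that $X\in\mathcal{C}'$. Then Lemma \ref{lm1.1} applies directly to the given $Y$: each summand $Z$ of $X\otimes Y$ satisfies $\theta_Z=\theta_X\cdot\theta_Y$. Specializing to $Z_1$ and $Z_2$ gives $\theta_{Z_1}=\theta_X\cdot\theta_Y=\theta_{Z_2}$, contradicting the hypothesis $\theta_{Z_1}\neq\theta_{Z_2}$. Hence $X\notin\mathcal{C}'$.

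For $Y$ the only extra ingredient is the commutativity of the tensor product up to the braiding. Since $\mathcal{C}$ is braided, $\beta_{X,Y}\colon X\otimes Y\to Y\otimes X$ is an isomorphism, so $X\otimes Y$ and $Y\otimes X$ decompose into the same simple summands with the same multiplicities; moreover $\theta$ is a natural isomorphism and is therefore constant on isomorphism classes, so $Z_1,Z_2$ occur as summands of $Y\otimes X$ with their twists unchanged. I then apply Lemma \ref{lm1.1} with the roles of $X$ and $Y$ interchanged: if $Y\in\mathcal{C}'$, every summand $Z$ of $Y\otimes X$ satisfies $\theta_Z=\theta_Y\cdot\theta_X$, again forcing $\theta_{Z_1}=\theta_{Z_2}$ and yielding the same contradiction. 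Thus $Y\notin\mathcal{C}'$ as well, which completes the argument.

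Because the corollary is a direct logical consequence of the lemma, there is essentially no genuine obstacle; the only point deserving a moment of care is the $Y$-case, where one must explicitly invoke that the braiding isomorphism $X\otimes Y\cong Y\otimes X$ preserves both the set of summands and their twists, rather than silently assuming symmetry or re-deriving the balancing identity. Once that is noted, the statement reduces to two applications of Lemma \ref{lm1.1}.
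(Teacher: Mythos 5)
Your proof is correct and follows exactly the route the paper intends: the corollary is stated as an immediate contrapositive of Lemma \ref{lm1.1}, with no separate proof given, and your two applications of the lemma (plus the observation that the braiding identifies the summands and twists of $X\otimes Y$ and $Y\otimes X$ for the $Y$-case) supply precisely the details the paper leaves implicit.
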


The symbol $\otimes$ above in general category theory corresponds to the symbol $\boxtimes$ in this paper. For the simple current $(1,q-1)=(p-1,1)$, we have $(1,q-1)\boxtimes(1,i)=(1,q-i)$ or $(p-1,1)\boxtimes(j,1)=(p-j,1)$. Then we only need to check
\begin{equation*}\label{eq1.7}
	\theta_{(1,q-i)} = \theta_{(1,q-1)}\cdot\theta_{(1,i)}
\end{equation*}
or
\begin{equation}\label{eq1.8}
	\theta_{(p-j,1)} = \theta_{(p-1,1)}\cdot\theta_{(j,1)}
\end{equation}
where $(1,i)$ or $(j,1)$ belongs to the corresponding subcategory. 

The equation $(\ref{eq1.7})$ holds if and only if $\frac{(i-1)(p-2)}{2}$ is an integer. It means that if $i$ only takes odd integers or $p$ is even, the equation $(\ref{eq1.7})$ holds for a fixed $p$. Therefore, when $p$ is odd and $q$ is even, it follows that $(1,q-1)\notin\mathcal{C}_1'$ and $(1,q-1)\in\mathcal{C}_2'$; when $p$ is even and $q$ is odd, it follows that $(1,q-1)\in\mathcal{C}_1'$ but not in $\mathcal{C}_2'$; 
when $p$ and $q$ are odd, it follows that $(1,q-1)$ is neither contained in $\mathcal{C}_1'$ nor in $\mathcal{C}_2'$.

Similarly, for $(\ref{eq1.8})$ to hold, it is required that $j$ only takes on odd values or $q$ is even. So when $p$ is even and $q$ is odd, it follows that $(p-1,1)\notin\mathcal{C}_3'$ and $(p-1,1)\notin\mathcal{C}_4'$; when $p$ is odd and $q$ is even, it follows that $(p-1,1)\in\mathcal{C}_3'$ but not in $\mathcal{C}_4'$; when $p$ and $q$ are odd, it follows that $(p-1,1)$ is neither contained in $\mathcal{C}_3'$ nor in $\mathcal{C}_4'$.

In subcategory $\mathcal{C}_1$ or $\mathcal{C}_2$, if $(1,i)\in\mathcal{C}_1'$ or $\mathcal{C}_2'$ with $i\neq1,q-1$, then for any $(1,j)\in\mathcal{C}_1$ or $\mathcal{C}_2$,
$$
\theta_{(1,l)}=\theta_{(1,i)}\cdot\theta_{(1,j)},
$$
where $(1,l)$ runs over the summands of $(1,i)\boxtimes(1,j)$. 
When we take $j=i$, $(1,1)$ and $(1,3)$ are both contained in $(1,i)\boxtimes(1,i)$. But $\theta_{(1,1)}=1,\theta_{(1,3)}=
\exp(2\pi i\frac{2p}{q})$. According to Corollary $\ref{coro1.2}$, we conclude that $(1,i)$ is not contained in the Müger Center $\mathcal{C}_1'$ or $\mathcal{C}_2'$. In subcategory $\mathcal{C}_3$ or $\mathcal{C}_4$, we similarly observe that there is no $(j,1)$ belonging to the Müger Center with $j\neq1,p-1$.

In subcategory $\mathcal{C}_5$, we only need to check
$$
\theta_{(1,q-1)\boxtimes(1,q-1)}=\theta_{(1,q-1)}^2,
$$
that is, to check whether $2h_{1,q-1}$ is an integer. Since $h_{1,q-1}=\frac{(p-2)(q-2)}{4}$, it follows that the Müger Center $\mathcal{C}_5'$ is trivial if and only if both $p$ and $q$ are odd, and nontrivial otherwise.

In subcategory $\mathcal{C}_6$, for any $p,q$ and any $(i,j)\in\mathcal{C}_6$ where $(i,j)$ is not isomorphic to the simple current, it also holds that $(1,3)$ and $(3,1)$ are both contained in $(i,j)\boxtimes(i,j)$, and $\theta_{(1,3)}=\exp(2\pi i\frac{2p}{q})\neq\exp(2\pi i\frac{2q}{p})=\theta_{(3,1)}$ since $p$ and $q$ are coprime. Thus, these objects are not contained in the Müger Center $\mathcal{C}_6'$. The simple current is not contained in $\mathcal{C}_6$ if and only if $p$ and $q$ are odd; otherwise, it belongs to $\mathcal{C}_6$. Moreover, if the simple current is contained in $\mathcal{C}_6$, then according to the equation
$$
\theta_{(p-i,j)}=\theta_{(p-1,1)}\cdot\theta_{(i,j)},
$$ 
it must belong to the Müger Center $\mathcal{C}_6'$.

According to the above discussion, we can draw the conclusion that: when $p$ is odd and $q$ is even, only the Müger Centers $\mathcal{C}_1'$ and $\mathcal{C}_4'$ are trivial; when $p$ is even and $q$ is odd, only the Müger Centers $\mathcal{C}_2'$ and $\mathcal{C}_3'$ are trivial; and when $p$ and $q$ are odd, the Müger Centers $\mathcal{C}_i'$ are all trivial for $i\in\{1,2,3,4,5,6\}$. Thus, we get another proof of Theorems $\ref{thm3}$ and $\ref{thm4}$.

\section{Modular tensor subcategories of simple current extension $\widetilde{\mathcal{C}}_p$}
Let $q=p+1$, $L(c_{p,p+1},0)$ is a unitary Virasoro vertex operator algebra for any $p\geq2$. In this section, we will consider the module category of its simple current extension. The properties of its simple current extension has been extensively studied in \cite{LLY}: when $p\equiv1,2\pmod4$, the $V\text{-}$module $\widetilde{V}_p=(1,1)\oplus(1,p)$ admits the structure of a vertex operator algebra, where $(1,1)$ is the vertex operator algebra $V$ and $(1,p)$ is a simple current of $V$. Denote by $\widetilde{\mathcal{C}}_p$ the module category of $\widetilde{V}_p$ which still has a modular tensor category structure according to \cite{HKL}. Our goal is to find the subcategories of $\widetilde{\mathcal{C}}_p$. 

Again by \cite{LLY}, for $p\equiv1\pmod4$, the simple objects of $\widetilde{\mathcal{C}}_p$ are $N_{r,s}=(r,s)\oplus(r,p\!+\!1\!-\!s), (r,\frac{p+1}{2})^+$ and $(r,\frac{p+1}{2})^-$, where $(r,s),(r,p\!+\!1\!-\!s)\in\mathcal{C}_{p,p+1}$ with $1\leq r,s\leq\frac{p-1}{2}$ and $s$ is odd. For $p\equiv2\pmod4$, the simple objects of $\widetilde{\mathcal{C}}_p$ are $N_{r,s}=(r,s)\oplus(p\!-\!r,s), (\frac{p}{2},s)^+$ and $(\frac{p}{2},s)^-$, where $(r,s),(p\!-\!r,s)\in\mathcal{C}_{p,p+1}$ with $1\leq s\leq\frac{p}{2}$, $1\leq r<\frac{p}{2}$ and $r$ is odd. And all fusion rules of simple objects are determined therein.

\begin{proposition}\label{prop2.1}
	There are only two nontrivial premodular subcategories of $\widetilde{\mathcal{C}}_p$ for $p\equiv1,2\pmod4$, $p\geq5$ and $p\neq9,10$.
\end{proposition}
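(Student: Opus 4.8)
The plan is to follow the same strategy as in Proposition~\ref{prop1}: use the fusion rules of $\widetilde{\mathcal{C}}_p$ recorded in \cite{LLY} to decide exactly which subcollections of simple objects are closed under $\boxtimes$, since (the ambient category $\widetilde{\mathcal{C}}_p$ being modular) a premodular subcategory is precisely a nontrivial fusion-closed full subcategory containing the unit, with the braiding and ribbon structure inherited. I would treat the two residue classes $p\equiv1\pmod4$ and $p\equiv2\pmod4$ separately, because the simple objects are parametrized differently, but the combinatorial bookkeeping is parallel under the symmetry that exchanges the roles of the two indices $r$ and $s$.

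First I would isolate the ``line'' subcategories, i.e.\ those generated by a single family $N_{1,s}$ (respectively $N_{r,1}$). The decisive point is that the index constrained to be odd already forces a collapse: from the fusion rules the self-product $N_{1,3}\boxtimes N_{1,3}$ contains $N_{1,1}\oplus N_{1,3}\oplus N_{1,5}$, so any closed subcollection containing a nonunit line object must contain the entire odd line. This leaves very few candidate line subcategories, and I would then generate the two-dimensional subcategories as products of one horizontal and one vertical family, exactly as in the table-driven argument of Proposition~\ref{prop1}, checking closure entry by entry until the two surviving premodular subcategories are pinned down.

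The genuinely new ingredient, and the step I expect to be the main obstacle, is the treatment of the fixed-point objects $(r,\frac{p+1}{2})^{\pm}$ (resp.\ $(\frac p2,s)^{\pm}$). Their fusion rules involve the splitting of a fixed point and therefore produce both $N$-type and $\pm$-type summands, so a candidate subcollection containing one such object need not close up in any transparent way. I would show that including any $\pm$ object forces, via its self-fusion, the appearance of summands of differing conformal weight (mirroring the Müger-center obstruction of Corollary~\ref{coro1.2}), which breaks closure for every small subcollection and thereby eliminates all candidates beyond the two identified above.

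Finally I would dispose of the exceptional values $p=9,10$ by direct enumeration: at these two values the index ranges $\frac{p-1}{2}$ (resp.\ $\frac p2$) are exactly such that additional odd-sublattice subcollections, the analogues of $\mathcal{C}_2,\mathcal{C}_4,\mathcal{C}_6$ from $\mathcal{C}_{p,q}$, happen to be fusion-closed, raising the count from $2$ to $6$; for every other $p$ the self-fusion of their generators escapes the putative subcollection, so these extra subcategories do not arise. The real content of the argument lies in the combinatorics of the $\pm$ objects, since establishing precisely when a subcollection involving them fails to close, and conversely why the isolated coincidence occurs only at $p=9,10$, is where the delicate verification concentrates.
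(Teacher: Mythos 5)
Your overall strategy (reduce to fusion-closure of subcollections of simple objects, treat the two residue classes separately, find the line subcategories first) is the same as the paper's, but two of your key mechanisms are not the ones that actually make the argument work, and one of them would fail outright. First, your proposed treatment of the fixed-point objects $(r,\frac{p+1}{2})^{\pm}$ via ``summands of differing conformal weight, mirroring Corollary~\ref{coro1.2}'' confuses two unrelated obstructions: a mismatch of twists among the summands of $X\boxtimes X$ obstructs $X$ from lying in the M\"uger center (i.e.\ it is relevant to \emph{modularity}), but it says nothing about whether a subcollection is \emph{closed under fusion}, which is the purely combinatorial condition at stake here. The correct argument, and the one the paper uses, is that the relevant (self- or cross-) fusion of $(1,\frac{p+1}{2})^{\pm}$ contains $N_{1,3}$ (for $k$ odd this sits in $(1,\frac{p+1}{2})^+\boxtimes(1,\frac{p+1}{2})^+$, for $k$ even in the cross product), and $N_{1,3}$ then regenerates the whole horizontal line. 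This also explains the true nature of the $p=9,10$ exceptions, which you misdescribe: they are not ``odd-sublattice'' analogues of $\mathcal{C}_2,\mathcal{C}_4,\mathcal{C}_6$, but the rank-two subcategories $\{N_{1,1},(1,5)^{\pm}\}$ (resp.\ $\{N_{1,1},(5,1)^{\pm}\}$) arising because at exactly these values the decomposition $N_{1,1}\oplus N_{1,5}\oplus\cdots\oplus N_{1,\frac{p+1}{2}-4}\oplus(1,\frac{p+1}{2})^{\pm}$ degenerates to $N_{1,1}\oplus(1,5)^{\pm}$ with no $N_{1,3}$ summand to force growth.

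Second, and more seriously, you do not address why there is no ``odd-index'' line subcategory in the direction whose index is \emph{not} constrained to be odd (e.g.\ the vertical direction $N_{r,1}$, $1\le r\le\frac{p-1}{2}$, for $p\equiv1\pmod 4$). In the unextended category this direction carries the closed sublattice $\mathcal{C}_4$ of odd-index objects, so a priori one expects a third subcategory $\{N_{1,1},N_{3,1},N_{5,1},\dots\}$ and its products with the horizontal line, which would push the count well above two. The paper kills this candidate by a step your outline omits entirely: for odd $m\ge\frac{p+3}{4}$ the fusion $(m,1)\boxtimes(m,1)$ produces $(2m-1,1)$ with $2m-1>\frac{p-1}{2}$, which under the identification $\sigma(2m-1,1)=(p+1-2m,p)$ folds back to the \emph{even}-index object $N_{p+1-2m,1}$; hence the odd sublattice is not fusion-closed in $\widetilde{\mathcal{C}}_p$ and all even terms are forced in. Without this folding argument your enumeration cannot arrive at the count of two, so this is a genuine gap rather than a matter of presentation.
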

\begin{proof}
	Since the category $\widetilde{\mathcal{C}}_p$ is a modular tensor category, we also use the fusion rules to study the closeness of fusion tensor product. By analogy with Proposition $\ref{prop1}$, we also find horizontal and vertical subcategories.
	
	For $p\equiv1\pmod4$, a horizontal or vertical subcategory only contains $N_{1,s},(1,\frac{p+1}{2})^{\pm}$ or $N_{r,1}$. Denote $\widetilde{\mathcal{D}}$ by possible horizontal subcategory, and set $p=4k+1$. 
	
	If $k$ is odd, $\left((1,\frac{p+1}{2})^{\pm}\right)^*=(1,\frac{p+1}{2})^{\mp}$, and it follows that
	\begin{eqnarray}\label{eq2.1}
		\nonumber 
		(1,\frac{p+1}{2})^+\boxtimes(1,\frac{p+1}{2})^+ &=& N_{1,3}\oplus N_{1,7}\oplus\cdots\oplus N_{1,\frac{p+1}{2}-4}\oplus(1,\frac{p+1}{2})^-, \\
	\nonumber 	(1,\frac{p+1}{2})^+\boxtimes(1,\frac{p+1}{2})^- &=& N_{1,1}\oplus N_{1,5}\oplus\cdots\oplus N_{1,\frac{p+1}{2}-2}, \\
	\nonumber 	(1,\frac{p+1}{2})^-\boxtimes(1,\frac{p+1}{2})^- &=& N_{1,3}\oplus N_{1,7}\oplus\cdots\oplus N_{1,\frac{p+1}{2}-4}\oplus(1,\frac{p+1}{2})^+.
	\end{eqnarray}
	If $N_{1,3}\in\widetilde{\mathcal{D}}$, we can obtain $N_{1,5}\in\widetilde{\mathcal{D}}$ since $N_{1,5}$ is a summand of $N_{1,3}\boxtimes N_{1,3}$. Using the fact that $N_{1,n}\in N_{1,3}\boxtimes N_{1,n-2}$ , it follows that any $N_{1,n}$ belongs to $\widetilde{\mathcal{D}}$ where $5\leq n<\frac{p+1}{2}$ and $n$ is odd. Moreover, $(1,\frac{p+1}{2})^{\pm}\in N_{1,3}\boxtimes N_{1,\frac{p-3}{2}}$. Thus, we obtain
	\begin{equation*}\label{eq2.4}
		\widetilde{\mathcal{C}}_1=\{ N_{1,1},N_{1,3},\dots,N_{1,\frac{p-3}{2}},(1,\frac{p+1}{2})^{\pm}\}.
	\end{equation*}
	If $N_{1,s}\in\widetilde{\mathcal{D}}, N_{1,3}\notin\widetilde{\mathcal{D}}$ with $3<s<\frac{p+1}{2}$ and $s$ is odd, it follows that $N_{1,3}$ is a summand of $N_{1,s}\boxtimes N_{1,s}$ which contradicts the assumption. If only $(1,\frac{p+1}{2})^{+}$ or $(1,\frac{p+1}{2})^{-}$ belongs to $\widetilde{\mathcal{D}}$, we can also obtain $N_{1,3}\in\widetilde{\mathcal{D}}$ by fusion rules above. Thus, there is only one nontrivial horizontal subcategory of $\widetilde{\mathcal{C}}_p$.
	
	If $k$ is even, $\left((1,\frac{p+1}{2})^{\pm}\right)^*=(1,\frac{p+1}{2})^{\pm}$, and it follows that 
	\begin{eqnarray}\label{eq2.5}
	\nonumber 	(1,\frac{p+1}{2})^+\boxtimes(1,\frac{p+1}{2})^+ &=& N_{1,1}\oplus N_{1,5}\oplus\cdots\oplus N_{1,\frac{p+1}{2}-4}\oplus(1,\frac{p+1}{2})^+, \\
	\nonumber 	(1,\frac{p+1}{2})^+\boxtimes(1,\frac{p+1}{2})^- &=& N_{1,3}\oplus N_{1,7}\oplus\cdots\oplus N_{1,\frac{p+1}{2}-2}, \\
	\nonumber 	(1,\frac{p+1}{2})^-\boxtimes(1,\frac{p+1}{2})^- &=& N_{1,1}\oplus N_{1,5}\oplus\cdots\oplus N_{1,\frac{p+1}{2}-4}\oplus(1,\frac{p+1}{2})^-.
	\end{eqnarray}
	Then the situation is similar when $k$ is odd, there is only one horizontal subcategory except $p=9$. When $p=9$, according to the fusion rules above, we obtain
	\begin{align*}
		(1,5)^{+}\boxtimes(1,5)^{+}= & N_{1,1}\oplus(1,5)^{+}, \\
		(1,5)^{+}\boxtimes(1,5)^{-}= & N_{1,3}, \\
		(1,5)^{-}\boxtimes(1,5)^{-}= & N_{1,1}\oplus(1,5)^{-}.
	\end{align*}
	There are three nontrivial horizontal subcategories of $\widetilde{\mathcal{C}}_9$: $\widetilde{\mathcal{C}}_1,\{N_{1,1},(1,5)^+\}$ and $\{N_{1,1},(1,5)^-\}$. 
	
	The situation of the vertical subcategories is slightly different from that of the horizontal ones. It is obvious that
	\begin{equation*}\label{eq2.8}
		\widetilde{\mathcal{C}}_2=\{N_{1,1},N_{2,1},\dots,N_{\frac{p-1}{2},1}\}
	\end{equation*}
	is a nontrivial subcategory of $\widetilde{\mathcal{C}}_p$.
	
	Denote $\widetilde{\mathcal{D}}$ by the other possible vertical subcategory of $\widetilde{\mathcal{C}}_p$. If $N_{2,1}\notin\widetilde{\mathcal{D}}$ and $N_{3,1}\in\widetilde{\mathcal{D}}$, we obtain $N_{5,1}\in\widetilde{\mathcal{D}}$ since $N_{5,1}$ is a summand of $N_{3,1}\boxtimes N_{3,1}$. Repeating this, it seems that we can obtain a new vertical subcategory similar to that in Proposition $\ref{prop1}$. If $N_{m,1}\in\widetilde{\mathcal{D}}$, $m$ is odd and $m\geq\frac{p+3}{4}$, it follows that
	\begin{equation*}
		(m,1)\boxtimes(m,1)=(1,1)\oplus(3,1)\oplus\cdots\oplus(2m-1,1),
	\end{equation*}
	according to the fusion rules of $\mathcal{C}_{p,p+1}$. But since $2m-1>\frac{p-1}{2}$ contradicts the assumption of objects in $\widetilde{\mathcal{C}}_p$, we should add the transformation $\sigma$ to $(2m\!-\!1,1)$:
	$$
	\sigma(2m\!-\!1,1)=(p\!+\!1\!-\!2m,p).
	$$
	And $N_{p+1-2m,1}=(p\!+\!1\!-\!2m,1)\oplus(p\!+\!1\!-\!2m,p)$. From the fusion rules of $\widetilde{\mathcal{C}}_p$, we obtain that $N_{p+1-2m,1}\in N_{m,1}\boxtimes N_{m,1}$, where $p+1-2m$ is even. Then we can conclude that all even terms $N_{even,1}$ belong to $\widetilde{\mathcal{D}}$. Thus, there is only one nontrivial vertical subcategory and there are only two nontrivial subcategories of $\widetilde{\mathcal{C}}_p$ except $p=9$.
	
	For $p\equiv2\pmod 4$, set $p=4k+2$. Whether $k$ is odd or even, there is only one nontrivial horizontal or vertical subcategory except when $p=10$:
	\begin{eqnarray*}
		\widetilde{\mathcal{C}}_1 &=& \{ N_{1,1},N_{1,2},\dots,N_{1,\frac{p}{2}}\}, \\
		\widetilde{\mathcal{C}}_2 &=& \{ N_{1,1},N_{3,1},\dots,N_{\frac{p-4}{2},1},(\frac{p}{2},1)^{\pm}\}.
	\end{eqnarray*}
	Thus, there are only two nontrivial subcategories of $\widetilde{\mathcal{C}}_p$ except $p=10$.
\end{proof}
\begin{remark}
	\em{There are six nontrivial subcategories of $\widetilde{\mathcal{C}}_p$ when $p=9,10$.}
	
	When $p=9$, set $\widetilde{\mathcal{C}}_{3}=\{N_{1,1},(1,5)^+\}$ and $\widetilde{\mathcal{C}}_{4}=\{N_{1,1},(1,5)^-\}$. Then we use $\widetilde{\mathcal{C}}_{3},\widetilde{\mathcal{C}}_{4}$ and $\widetilde{\mathcal{C}}_{2}$ to generate the other subcategories. Thus, $\widetilde{\mathcal{C}}_{5}=(\widetilde{\mathcal{C}}_{2},\widetilde{\mathcal{C}}_{3})$ and $\widetilde{\mathcal{C}}_{6}=(\widetilde{\mathcal{C}}_{2},\widetilde{\mathcal{C}}_{4})$.
	
	When $p=10$, set $\widetilde{\mathcal{C}}_{3}=\{N_{1,1},(5,1)^+\}$ and $\widetilde{\mathcal{C}}_{4}=\{N_{1,1},(5,1)^-\}$. Then we use $\widetilde{\mathcal{C}}_{3},\widetilde{\mathcal{C}}_{4}$ and $\widetilde{\mathcal{C}}_{1}$ to generate the other subcategories. Thus, $\widetilde{\mathcal{C}}_{5}=(\widetilde{\mathcal{C}}_{1},\widetilde{\mathcal{C}}_{3})$ and $\widetilde{\mathcal{C}}_{6}=(\widetilde{\mathcal{C}}_{1},\widetilde{\mathcal{C}}_{4})$.
\end{remark}

\begin{theorem}
	For $p\equiv1,2\pmod 4$, $\widetilde{\mathcal{C}}_1$ and $\widetilde{\mathcal{C}}_2$ are modular tensor categories. Moreover, $\widetilde{\mathcal{C}}_i$ with $i=3,4,5,6$ are modular tensor categories for $p=9,10$.
\end{theorem}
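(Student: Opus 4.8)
The plan is to establish modularity through the Müger center criterion of Section 3 rather than through a direct determinant of the $S$-matrix, since the modular data at the split fixed points of the simple current extension is cumbersome to write out. Recall that a premodular category is modular exactly when its Müger center is trivial, and that by Lemma \ref{lm1.1} an object $X$ lying in the Müger center must satisfy $\theta_Z=\theta_X\cdot\theta_Y$ for \emph{every} summand $Z$ of $X\boxtimes Y$ and every $Y$; hence to exclude a non-unit simple $X$ from the Müger center it suffices to exhibit one $Y$ and one summand violating this identity (and, when two summands carry distinct twists, Corollary \ref{coro1.2} applies directly). I will feed into this the twists inherited from the underlying Virasoro modules: writing $\theta_{(m,n)}=\exp(2\pi i\,h_{m,n})$ with $q=p+1$, each glued object $N_{r,s}=(r,s)\oplus(r,p\!+\!1\!-\!s)$ and each split object $(r,\tfrac{p+1}{2})^{\pm}$ carries the twist of its underlying module, so every twist value in $\widetilde{\mathcal{C}}_p$ is governed by the numbers $h_{r,s}$.

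First I would treat $\widetilde{\mathcal{C}}_1$ and $\widetilde{\mathcal{C}}_2$ for general $p\equiv1,2\pmod4$. For a non-unit $N_{1,s}$ (resp. $N_{r,1}$) I expand its self-tensor product via the fusion rules of \cite{LLY} recorded in Proposition \ref{prop2.1}; the decomposition contains the unit $N_{1,1}$, whose twist is $1$, together with a further summand of non-integral conformal weight, so two summands have distinct twists and Corollary \ref{coro1.2} forces that object out of the Müger center. For the split resolutions the cleanest device is to pair the two halves: $(1,\tfrac{p+1}{2})^{+}\boxtimes(1,\tfrac{p+1}{2})^{-}$ (and similarly $(\tfrac{p}{2},1)^{+}\boxtimes(\tfrac{p}{2},1)^{-}$) contains $N_{1,1}$ together with some $N_{1,t}$ of non-integral weight, which simultaneously excludes both $(\cdot)^{+}$ and $(\cdot)^{-}$ from the Müger center. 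This shows $\widetilde{\mathcal{C}}_1'=\widetilde{\mathcal{C}}_2'=\langle\mathbf 1\rangle$ and hence both are modular.

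For the exceptional values $p=9,10$ I argue analogously on $\widetilde{\mathcal{C}}_3,\dots,\widetilde{\mathcal{C}}_6$. The rank-two categories $\widetilde{\mathcal{C}}_3=\{N_{1,1},(1,5)^{+}\}$ and $\widetilde{\mathcal{C}}_4=\{N_{1,1},(1,5)^{-}\}$ (and their $p=10$ analogues) carry the Fibonacci-type fusion $(1,5)^{+}\boxtimes(1,5)^{+}=N_{1,1}\oplus(1,5)^{+}$; since $h_{1,5}=\tfrac{17}{5}\notin\mathbb{Z}$, the summands $N_{1,1}$ and $(1,5)^{+}$ have unequal twists, so Corollary \ref{coro1.2} again gives a trivial Müger center. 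For $\widetilde{\mathcal{C}}_5=(\widetilde{\mathcal{C}}_2,\widetilde{\mathcal{C}}_3)$ and $\widetilde{\mathcal{C}}_6=(\widetilde{\mathcal{C}}_2,\widetilde{\mathcal{C}}_4)$ I would either rerun the twist comparison on each non-unit generator, or verify that the two generating subcategories mutually centralize, so that these split as Deligne products $\widetilde{\mathcal{C}}_2\boxtimes\widetilde{\mathcal{C}}_3$ and $\widetilde{\mathcal{C}}_2\boxtimes\widetilde{\mathcal{C}}_4$ of modular factors, which are automatically modular, mirroring the factorization arguments used earlier in the paper.

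The step I expect to be the main obstacle is twofold. The first difficulty is the bookkeeping at the split fixed points: one must confirm that the self- or mutual fusion of each split resolution genuinely produces a summand with $\theta_Z\neq\theta_X\theta_Y$, uniformly across $p\equiv1$ and $p\equiv2\pmod4$. The second, subtler, difficulty is that Corollary \ref{coro1.2} is useless precisely when a subcategory happens to be pointed, so that all tensor products of simples are simple; the smallest instance is $\widetilde{\mathcal{C}}_1$ at $p=5$, which has $\mathbb{Z}/3$ fusion with $(1,3)^{+}\boxtimes(1,3)^{+}=(1,3)^{-}$. There one must fall back on the primitive form of Lemma \ref{lm1.1}, checking directly that $\theta_{(1,3)^{-}}=\exp(4\pi i/3)\neq\exp(2\pi i/3)=\theta_{(1,3)^{+}}^{2}$, i.e. that the associated quadratic form on the group of invertibles is non-degenerate. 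Isolating all such small pointed or rank-two subcategories, and keeping the fixed-point resolutions straight in the $p=9,10$ cases where the extra subcategories arise, is where the argument requires the most care.
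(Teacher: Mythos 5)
Your proposal is correct and follows essentially the same route as the paper: the paper also proves modularity by showing each non-unit simple object fails the Müger-center test of Corollary \ref{coro1.2}, comparing the twists of $N_{1,1}$ and a second summand (e.g.\ $N_{1,3}$ or $N_{5,1}$) in a self- or mutual fusion product, including pairing $(\tfrac{p}{2},1)^{+}$ with $(\tfrac{p}{2},1)^{-}$ at the split fixed points. Your flagged ``pointed'' edge case is real and is handled in the paper exactly as you anticipate, by falling back on Lemma \ref{lm1.1} directly (the paper does this for $\widetilde{\mathcal{C}}_2$ at $p=6$, where $(3,1)^{+}\boxtimes(3,1)^{-}=N_{1,1}$ but $\theta_{(3,1)^{+}}\theta_{(3,1)^{-}}\neq 1$).
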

\begin{proof}
	It is obvious that $\widetilde{\mathcal{C}}_1$ and $\widetilde{\mathcal{C}}_2$ are premodular. In order to prove that $\widetilde{\mathcal{C}}_1$ and $\widetilde{\mathcal{C}}_2$ are modular tensor categories, it suffices to prove that $\widetilde{\mathcal{C}}_1$ and $\widetilde{\mathcal{C}}_2$ are non-degenerate. 
	
	For $p\equiv2\pmod4$, the horizontal subcategory is $\widetilde{\mathcal{C}}_1=\{N_{1,1}, \dots, N_{1,\frac{p}{2}}\}$. For any $N_{1,j}\in\widetilde{\mathcal{C}}_{1}$, $N_{1,1}$ and $N_{1,3}$ are summands of $N_{1,j}\boxtimes N_{1,j}$. Since $\theta_{N_{1,1}}\neq\theta_{N_{1,3}}$, it follows from Corollary $\ref{coro1.2}$ that $N_{1,j}$ is not contained in the Müger Center of $\widetilde{\mathcal{C}}_{1}$. By the arbitrariness of $N_{1,j}$, we deduce that $\widetilde{\mathcal{C}}_{1}$ is non-degenerate.
	
	The situation in $\widetilde{\mathcal{C}}_2$ is similar to that in $\widetilde{\mathcal{C}}_{1}$, except for the objects $(\frac{p}{2},1)^+$ and $(\frac{p}{2},1)^-$. Let $p=4k+2$. If $k$ is odd, then $N_{1,1}$ and $N_{5,1}$ belong to $(\frac{p}{2},1)^+\boxtimes(\frac{p}{2},1)^-$. For $p>6$, since $\theta_{N_{1,1}}=1\neq \exp(2\pi i\frac{6}{p})=\theta_{N_{5,1}}$, neither $(\frac{p}{2},1)^+$ nor $(\frac{p}{2},1)^-$ is contained in the Müger Center of $\widetilde{\mathcal{C}}_2$. When $p=6$, $\widetilde{\mathcal{C}}_2=\{N_{1,1},(3,1)^+,(3,1)^-\}$. It follows that $(3,1)^+\boxtimes(3,1)^-=N_{1,1}$, and $\theta_{(3,1)^+}\theta_{(3,1)^-}=\exp(\frac{4\pi i}{3})\neq 1=\theta_{N_{1,1}}$. Thus, $\widetilde{\mathcal{C}}_2$ is also non-degenerate. If $k$ is even, $N_{1,1}$ and $N_{5,1}$ belong to $(\frac{p}{2},1)^{\pm}\boxtimes(\frac{p}{2},1)^{\pm}$. This is entirely consistent with the situation above; therefore, whether $k$ is odd or even, $\widetilde{\mathcal{C}}_2$ is non-degenerate. 
	
	The situation for $p\equiv1\pmod4$ is similar to that for $p\equiv2\pmod4$. Thus, $\widetilde{\mathcal{C}}_1$ and $\widetilde{\mathcal{C}}_2$ are non-degenerate when $p\equiv1,2\pmod 4$. As for $p=9,10$, we can also use Corollary $\ref{coro1.2}$ to show that $\widetilde{\mathcal{C}}_i$ with $i=3,4,5,6$ are non-degenerate.
\end{proof}

\begin{remark}
	\em{When $p\equiv1,2\pmod 4$, $\widetilde{\mathcal{C}}_1$ and $\widetilde{\mathcal{C}}_2$ are modular tensor categories. There are embeddings of $\widetilde{\mathcal{C}}_1$ and $\widetilde{\mathcal{C}}_2$ into $\widetilde{\mathcal{C}}_p$, respectively. And it also follows that $\widetilde{\mathcal{C}}_2$ is the centralizer of $\widetilde{\mathcal{C}}_1$ in $\widetilde{\mathcal{C}}_p$. According to \cite{DMNO, M}, the category $\widetilde{\mathcal{C}}_p$ is isomorphic to Deligne's product $\widetilde{\mathcal{C}}_1\boxtimes\widetilde{\mathcal{C}}_2$.}
\end{remark}

\section{Application}
Recall the Mirror extension mentioned in \cite{L} and \cite{CKM}. Let $\mathcal{D}_1$ and $\mathcal{D}_2$ be module (sub)categories of two rational, $C_2$-cofinite and self-dual vertex operator algebras, and let the functor $\mathcal{F}: \mathcal{D}_1\rightarrow\mathcal{D}_2$ be a braid-reversed equivalence. There exists an extension $A$ of the vertex operator algebra $1_{\mathcal{D}_1}\boxtimes 1_{\mathcal{D}_2}$ in the Deligne product $\mathcal{C}=\mathcal{D}_1\boxtimes\mathcal{D}_2$, and the decomposition of $A$ takes the form
$$
A=\sum_{X\in Irr(\mathcal{D}_1)}X\boxtimes \mathcal{F}(X)',
$$
where $X$ runs over all simple objects of the category $\mathcal{D}_1$.

In the preceding text, we have already identified all subcategories of $\mathcal{C}_{p,p+1}$ and can present modular subcategories of arbitrary rank (see Remark \ref{rmk2.7}). Since all simple objects in such subcategories are self-dual, it is natural to ask: if two modular subcategories have the same rank and fusion rules, can we establish a braid-reversed equivalence between these two subcategories via the correspondence between their simple objects, and further realize the aforementioned such mirror extension in the Deligne product of these two subcategories?

Meanwhile, after we fix two subcategories satisfying the given conditions, for the constructed object $A$ to be an extension of the vertex operator algebra, it must also satisfy the following necessary condition: $\theta_A=\text{id}_A$, that is, for any simple object $X\in\mathcal{D}_1$, the equation
\begin{equation*}\label{eq5.1}
  \theta_{X\boxtimes\mathcal{F}(X)'}=\theta_X\cdot\theta_{\mathcal{F}(X)}=\text{id}_{X\boxtimes\mathcal{F}(X)'}
\end{equation*}
holds. By the definition of the twist, the equation is equivalent to $h_X+h_{\mathcal{F}(X)}\in\mathbb{Z}$ for any simple object $X\in\mathcal{D}_1$. 

Thus, after ``gluing" the algebra, we can first screen it by means of the given conditions. On the other hand, according to \cite{GKO}, in the decomposition of the vertex operator algebra extension in the GKO construction, all simple summands involved are modules of the Virasoro VOA, and we can use the commutant of GKO construction to verify whether the algebra obtained in this manner is indeed a vertex operator algebra.

We now present several concrete examples.
\begin{example}
  The modular subcategories of $\mathcal{C}_{4,5}$ are
  \begin{eqnarray*}
    \mathcal{C}_2 &=& \{(1,1),(1,3)\}, \\
    \mathcal{C}_3 &=& \{(1,1),(2,1),(3,1)\}.
  \end{eqnarray*}
  The modular subcategories of $\mathcal{C}_{5,6}$ are
  \begin{eqnarray*}
    \mathcal{C}_1 &=& \{(1,1),(1,2),(1,3),(1,4),(1,5)\}, \\
    \mathcal{C}_4 &=& \{(1,1),(3,1)\}.
  \end{eqnarray*}
  The modular subcategories of $\mathcal{C}_{6,7}$ are
  \begin{eqnarray*}
    \mathcal{C}_2 &=& \{(1,1),(1,3),(1,5)\}, \\
    \mathcal{C}_3 &=& \{(1,1),(2,1),(3,1),(4,1),(5,1)\}.
  \end{eqnarray*}
  The modular subcategories of $\mathcal{C}_{7,8}$ are
  \begin{eqnarray*}
    \mathcal{C}_1 &=& \{(1,1),(1,2),(1,3),(1,4),(1,5),(1,6),(1,7)\}, \\
    \mathcal{C}_4 &=& \{(1,1),(3,1),(5,1)\}.
  \end{eqnarray*}
  
  $(1)$ The subcategories $\mathcal{C}_2$ of $\mathcal{C}_{4,5}$ and $\mathcal{C}_4$ of $\mathcal{C}_{5,6}$ have the same rank and fusion rules, and thus we have
  $$
  A=h^{4,5}_{1,1}\boxtimes h^{5,6}_{1,1}\oplus h^{4,5}_{1,3}\boxtimes h^{5,6}_{3,1},
  $$
  where $h^{4,5}_{1,1}+h^{5,6}_{1,1}=0, h^{4,5}_{1,3}+h^{5,6}_{3,1}=2$. $A$ is a vertex operator algebra appearing in the GKO construction of $\mathcal{L}_{\widehat{sl_2}}(1,0)^{\otimes4}$.
  
  $(2)$ The subcategories $\mathcal{C}_1$ of $\mathcal{C}_{5,6}$ and $\mathcal{C}_3$ of $\mathcal{C}_{6,7}$ have the same rank and fusion rules, and thus we have
  $$
  A=h^{5,6}_{1,1}\boxtimes h^{6,7}_{1,1}\oplus h^{5,6}_{1,2}\boxtimes h^{6,7}_{2,1}\oplus h^{5,6}_{1,3}\boxtimes h^{6,7}_{3,1}\oplus h^{5,6}_{1,4}\boxtimes h^{6,7}_{4,1}\oplus h^{5,6}_{1,5}\boxtimes h^{6,7}_{5,1},
  $$
  where $h^{5,6}_{1,1}+h^{6,7}_{1,1}=0, h^{5,6}_{1,2}+h^{6,7}_{2,1}=\frac{1}{2}, h^{5,6}_{1,3}+h^{6,7}_{3,1}=2, h^{5,6}_{1,4}+h^{6,7}_{4,1}=\frac{9}{2}, h^{5,6}_{1,5}+h^{6,7}_{5,1}=8$. Thus, as some of these sums are not integers, $A$ is not a vertex operator algebra.
  
  $(3)$ The subcategories $\mathcal{C}_2$ of $\mathcal{C}_{6,7}$ and $\mathcal{C}_4$ of $\mathcal{C}_{7,8}$ have the same rank and fusion rules, and thus we have
  $$
  A=h^{6,7}_{1,1}\boxtimes h^{7,8}_{1,1}\oplus h^{6,7}_{1,3}\boxtimes h^{7,8}_{3,1}\oplus h^{6,7}_{1,5}\boxtimes h^{7,8}_{5,1},
  $$
  where $h^{6,7}_{1,1}+h^{7,8}_{1,1}=0, h^{6,7}_{1,3}+h^{7,8}_{3,1}=2, h^{6,7}_{1,5}+h^{7,8}_{5,1}=8$. $A$ is a vertex operator algebra appearing in the GKO construction of $\mathcal{L}_{\widehat{sl_2}}(1,0)^{\otimes6}$.
\end{example}

By \cite{GKO} and \cite{JL}, we have 
$$
\mathcal{L}_{\widehat{sl_2}}(1,0)^{\otimes(n+1)}=\sum_{0\leq2k\leq n+1}\mathcal{L}_{\widehat{sl_2}}(n+1,2k)\otimes M^{(n)}(2k),
$$
where $M^{(n)}(2k)=\oplus_{0\leq 2l\leq n}M^{(n-1)}(2l)\otimes L(c_{n+2,n+3},h_{2l+1,2k+1})$, $M^{(0)}(0)=\mathbb{C}$, and $k$ is a non-negative integer. Moreover, the space $M^{(n-1)}(0)\otimes L(c_{n+2,n+3},0)$ is a full vertex operator subalgebra of $M^{(n)}(0)$. Then we have
\begin{equation*}\label{eq5.2}
  \begin{split}
     M^{(n)}(0) & = \text{Com}_{\mathcal{L}_{\widehat{sl_2}}(1,0)^{\otimes(n+1)}}(\mathcal{L}_{\widehat{sl_2}}(n+1,0))\\
       & =\bigoplus_{0\leq 2l\leq n}\left[\bigoplus_{0\leq 2t\leq n-1}M^{(n-2)}(2t)\otimes L(c_{n+1,n+2},h_{2t+1,2l+1})\right]\otimes L(c_{n+2,n+3},h_{2t+1,1}) \\
       & =\bigoplus_{0\leq 2t\leq n-1}M^{(n-2)}(2t)\otimes\left[\bigoplus_{0\leq 2l\leq n}L(c_{n+1,n+2},h_{2t+1,2l+1})\otimes L(c_{n+2,n+3},h_{2t+1,1})\right],
  \end{split}
\end{equation*}
where $\text{Com}_{V}(U)$ denotes the commutant vertex operator algebra of $U$ in $V$.

It implies that $M^{(n)}(0)$ has a full vertex operator subalgebra $\widetilde{N}^{(n)}:=M^{(n-2)}(0)\otimes(\oplus_{0\leq 2l\leq n}L(c_{n+1,n+2},h_{1,2l+1})\otimes L(c_{n+2,n+3},h_{2l+1,1}))$, and $\widetilde{N}^{(n)}$ also has a vertex operator subalgebra $N^{(n)}:=\oplus_{0\leq 2l\leq n}L(c_{n+1,n+2},h_{1,2l+1})\otimes L(c_{n+2,n+3},h_{2l+1,1})$. According to \cite{L} and \cite{Mc}, we known that $\{L(c_{n+1,n+2},h_{1,2l+1})\vert 0\leq 2l\leq n\}$ and ${L(c_{n+2,n+3},h_{2l+1,1})\vert 0\leq 2l\leq n}$ are subcategories of the corresponding category. 

The first of the above subcategory is the subcategory $\mathcal{C}_2$ of $\mathcal{C}_{n+1,n+2}$ and the second is the subcategory $\mathcal{C}_4$ of $\mathcal{C}_{n+2,n+3}$; specifically, when $p=n+2$ is odd, these subcategories are non-degenerate, and when $p=n+2$ is even, these subcategories are degenerate.

\begin{remark}
\em{  Vertex operator algebras can appear not only by ``gluing" two modular tensor subcategories but also by ``gluing" two premodular subcategories.}
\end{remark}

Thus, we can provide a series of examples of commutant vertex operator algebras based on the results derived from the aforementioned subcategories.

\begin{remark}
\em{Extensions with preliminary progress, including the premodular and modular tensor subcategories within the module categories of the affine vertex operator alge-\hspace{0pt}\\bras $\mathcal{L}_{\widehat{\mathfrak{sl}_n}}(k,0)$ for integer $n\geq 2$ at any level $k$, as well as those in the module categories of the parafermion vertex operator algebra $K(\mathfrak{sl}_n,k)$, will be presented in detail, along with the full proofs of the results, in a forthcoming paper.}
\end{remark}

\section*{Acknowledgement}
Y. Xiao is supported by the National Natural Science Foundation of
China (No. 12401034) and the Natural Science Foundation of Shandong Province (No.
ZR2023QA066). W. Zheng is supported by the National Natural Science Foundation of
China (No. 12201334) and the Natural Science Foundation of Shandong Province (No. ZR2022QA023).

\end{document}